\DeclareMathOperator*{\nn}{\nonumber}
\DeclareMathOperator*{\cost}{cost}
\newtheorem{lemma}{Lemma}
\newtheorem{theorem}{Theorem}
\newtheorem{remark}{Remark}
\newtheorem{problem}{Problem}
\theoremstyle{definition}
\def\rr{\textcolor{black}}
\renewcommand{\v}[1]{\mathbf{#1}}
\DeclareMathOperator{\regret}{\operatorname{Regret}}
\DeclarePairedDelimiterX{\norm}[1]{\lVert}{\rVert}{#1}
\def\blfootnote{\gdef\@thefnmark{}\@footnotetext}
\title{\textcolor{black}{Regret-Optimal LQR Control}}
\begin{document}
\author{Oron Sabag, Sahin Lale, Gautam Goel, Babak Hassibi}
\maketitle
\begin{abstract}
We consider the infinite-horizon LQR control problem. \rr{Motivated by competitive analysis in online learning,} as a criterion for controller design we introduce the dynamic regret, defined as the difference between the LQR cost of a causal controller (that has only access to past disturbances) and the LQR cost of the \emph{unique} clairvoyant one (that has also access to future disturbances) that is known to dominate all other controllers. The regret itself is a function of the disturbances, and we propose to find a causal controller that minimizes the worst-case regret over all bounded energy disturbances. The resulting controller has the interpretation of guaranteeing the smallest regret compared to the best non-causal controller that can see the future. We derive explicit formulas for the optimal regret and for the regret-optimal controller for the state-space setting. These explicit solutions are obtained by showing that the regret-optimal control problem can be reduced to a Nehari extension problem that can be solved explicitly. The regret-optimal controller \rr{is shown to be linear} and can be expressed as the sum of the classical $\mathcal H_2$ state-feedback law and an $n$-th order controller ($n$ is the state dimension), and its construction simply requires a solution to the standard LQR Riccati equation and two Lyapunov equations. Simulations over a range of plants demonstrate that the regret-optimal controller interpolates nicely between the $\mathcal H_2$ and the $\mathcal H_\infty$ optimal controllers, and generally has $\mathcal H_2$ and $\mathcal H_\infty$ costs that are simultaneously close to their optimal values. The regret-optimal controller thus presents itself as a viable option for control systems design.
\end{abstract}
\section{Introduction}
\blfootnote{O. Sabag was with the Department of Electrical Engineering at California Institute of Technology. He is now with the Rachel and Selim Benin School of Computer Science and Engineering, Hebrew University of Jerusalem, (email: oron.sabag@mail.huji.ac.il). G. Goel is with the Simons Institute, UC Berkeley (e-mail:ggoel@berkeley.edu) S. Lale and B. Hassibi are with the Department of Electrical Engineering at California Institute of Technology (e-mails:
 \{alale,hassibi\}@caltech.edu).}
\blfootnote{A preliminary version of this paper has been published in the ACC 2021~\cite{sabagFIACC}.}
In this paper, we consider control through the lens of \textit{regret minimization}. While the literature on control is vast, control theorists have largely studied control in two distinct settings. In one setting, we assume that the disturbances are generated by random processes whose statistics we know (in the Gaussian case this is LQG control, in the iid case with linear controllers it is $\mathcal H_2$ control), and the goal is to design a control policy which minimizes the expected control cost. In the other setting, robust control, there are no distributional assumptions about the disturbance and we seek to minimize the worst-case gain across all bounded disturbances (for bounded energy or power this is $\mathcal H_\infty$ control \cite{29425,zames1981feedback}, for bounded amplitude it is $\ell_1$ control \cite{1104603}). \rr{As a result, $\mathcal H_\infty$ are overly conservative since they safeguard against the worst-case, and $\mathcal H_2$ control are sensitive to modeling inaccuracies \cite{doyle1978guaranteed}. In this paper, we propose a new approach to deal with future uncertainty, which is based on a competitive criterion called the regret.}

In the regret framework, instead of trying to design controllers that achieve optimal performance relative to a certain class of disturbances, we propose to track the behavior of the benchmark non-causal controller. In particular, our criterion is the regret that measures the largest deviation from the non-causal controller, a regret-optimal controller is a one that aims to maintain \rr{a balanced performance across all disturbances}, regardless of whether the disturbances are stochastic, adversarial, etc. This stands in stark contrast to $\mathcal H_2$ and $\mathcal H_{\infty}$ control, which generally yield controllers that perform well in the environments they are designed for but whose performance can degrade badly when placed in different environments. This is made transparent in our regret problem formulation in \eqref{eq:comparison_robust}.

Regret minimization in control problems has attracted much recent interest in \rr{online learning} (see, e.g., \cite{abbasi2011regret,agarwal19c,foster20b,cohen19b,lale2020explore,dean2018regret,lale2020logarithmic,agarwal2019logarithmic,li2019online,hazan2019nonstochastic,goel2020power} and the references therein). In this paper, we study the so-called full-information control problem, where at every time instant the causal controller has access to past and current disturbances. Most papers in this area try to design causal controllers that compete with the best static linear state feedback controller selected in hindsight; in other words, they compete with the controller which in every round sets the control action $u_t$ to be $K x_t$ where $x_t$ is the state and $K$ is a {\em fixed} matrix selected with full clairvoyant knowledge of the disturbances. We believe this choice of non-causal controller to be rather unnatural:
\begin{center}
    \textit{Why restrict to static linear state feedback?}
\end{center}

It is not clear when or whether such a non-causal policy outperforms a causal controller that is allowed to be an arbitrary causal function of the states. We therefore would like to contend that it is more natural to design controllers which compete with the best sequence of control actions selected in hindsight, not just those generated by static linear state feedback. In other words, we seek to design controllers that compete with the optimal sequence of control actions $u_1^* \ldots u_{T-1}^*$, without imposing the restriction that the benchmark sequence satisfy $u_t^* = -K x_t$ for some fixed matrix $K$.

We utilize the regret metric to design a causal controller by comparing its performance with the unique optimal non-causal controller. The optimal regret is defined as the largest deviation in the LQR costs of the causal and the non-causal controller among all bounded energy disturbances. \rr{The motivation behind the regret definition is to construct a causal controller that aims to mimic the behavior of the non-causal controller by minimizing the regret distance.} At the operator level, we show that the regret problem can be reduced to the classical Nehari problem \cite{nehari1957bounded}. For the state-space setting, we derive the optimal regret as a simple formula and provide an explicit regret-optimal controller that is given by an explicit state-space realization. The resulting controller inherits the finite-dimensional state-space of the underlying system and its implementation requires the computation of the standard LQR Riccati equation and two additional Lyapunov equations.  

The rest of the paper is organized as follows. In Section \ref{sec:setting}, we present the problem formulation. Section \ref{sec:main} includes our main results, and Section \ref{sec:numerical} contains numerical simulations. Section \ref{sec:proof} includes the proofs, and the paper is concluded in Section \ref{sec:conclusion}. %\bb{The appendix includes some extended proofs will be referred to \cite{extended}.}

\section{The Setting and Problem Formulation}\label{sec:setting}
In this section, we introduce the setting and the regret-optimal control problem.
% \vspace{-0.5em}
\subsection{Notation}
\rr{The Euclidean norm of a vector $x$ is $\|x\|_2$. The operator norm of $A$ is $\|A\|$, and $\|A\|_F$ denotes its Frobenius norm. We use $A^\ast$ to denote the conjugate transpose of $A$. If $A$ has only real eigenvalues, its largest eigenvalue is denoted by $ \lambda_{\text{max}}(A)$. The strictly causal part (its strictly lower triangular part) and the anticausal part (its upper triangular part) of an operator $A$ are denoted by $\{A\}_+$ and $\{A\}_-$, respectively. An operator $A$ is said to be strictly causal if $\{A\}_-=0$. We use $I$ to denote the identity matrix when the dimensions are clear.}
% \vspace{-0.3em}
\subsection{The state-space setting}\label{subsec:ss}
We consider time-invariant dynamical systems given by
\begin{align}\label{eq:dynamical_sys}
    x_{t+1} = A x_t + B_u u_t + B_w w_t,
\end{align}
where $x_t \in \mathbb{R}^n$ is the state variable, $u_t \in \mathbb{R}^m$ is the control variable which we can dynamically adjust to influence the evolution of the system, and $w_t \in \mathbb{R}^p$ is the disturbance. It is also assumed that the pair $(A,B_u)$ is stabilizable.

A policy $\pi$ is defined as a mapping of disturbance sequences $w = \{w_t\}$ to control sequences $u = \{u_t\}$\footnote{\rr{We later show that the regret-optimal controller can be computed with the states only, but it is convenient to define policy as a function of the disturbance.}}. We focus on the doubly infinite-horizon regime where, for a fixed policy $\pi$, the linear–quadratic regulator (LQR) cost is given by
\begin{align}\label{eq:cost_def_DoublyInf}
  \cost(\pi;w) = \sum_{t=-\infty}^\infty \left(x_t^\ast Qx_t+u_t^\ast Ru_t\right),
\end{align}
for $Q, R \succ\! 0$. For \eqref{eq:cost_def_DoublyInf} to be finite (and meaningful) disturbances are assumed to have bounded energy, i.e., $w \!=\! \{w_t\}\!\in\!\ell_2$.\looseness=-1

In $\mathcal H_2$ and $\mathcal H_\infty$ control, the objective is to design a controller which minimizes the LQR cost under different assumptions on the disturbances sequence $w$. \rr{Our approach is different since we optimize a competitive criterion that compares the LQR cost of the causal controller (to be designed) with the cost of an unrealizable controller that serves as a benchmark. Specifically, we use the regret criterion defined as the difference between the LQR costs of a causal controller and the optimal non-causal controller.} Formally, define the set of non-causal policies as $\Pi^{\text{N.C.}}$ that map the disturbance sequence $w$ to the sequence $u$. The set $\Pi^{\text{N.C.}}$ is not restricted to be linear, time-invariant etc. The regret of a policy $\pi$ is then defined as
\begin{align}\label{eq:def_regret}
  \regret(\pi) =\!\! \sup_{\|w\|_2\le 1}\!\! \left(\cost(\pi;w) - \inf_{\pi'\in \Pi^{\text{N.C.}}} \cost(\pi';w)\right).
\end{align}
The regret criterion is now clear; the performance of a controller, defined by a policy $\pi$, is measured with respect to the performance of the best non-causal controller. An important feature of \eqref{eq:def_regret} is that the disturbance $w$ plays a role in both costs and, therefore, the comparison between the policies is meaningful. \rr{The motivation to minimize regret is to design a controller whose performance aims to mimic the performance of the non-causal controller as much as possible. For disturbances for which (even) the non-causal controller has a large control cost, the designed controller may have a higher cost. However, if for a certain disturbance a lower cost is attainable for the non-causal controller, then the designed controller should have a low cost as well. The regret captures this behavior and results in a controller that is competitive with respect to the benchmark non-causal controller.}

In \eqref{eq:def_regret}, we assume that the disturbance $w$ has bounded energy, i.e., $w$ is an $\ell_2$ sequence. An alternative formulation is to define the cost as $\lim_{T\rightarrow\infty}\frac{1}{T}\cost_T(\pi;w)$ where $\cost_T$ is cumulative cost and to assume that the sequence $w$ has bounded power, i.e.,~$\lim_{T\rightarrow\infty}\frac{1}{T}{\sum_{t=0}^{T-1}}w_t^*w_t<\infty$. In $\mathcal H_\infty$ theory both formulations lead to the same optimal controller.

% \bb{cite a paper/book for this claim. Should the sum begin with $t=0$ or $t=-\infty$?}.

We consider strictly-causal policies, that is, the strictly causal controller chooses $u_t$ when it has access to $\{w_i\}_{i< t}$. The set of strictly causal policies is denoted by $\Pi^{S.C.}$. The regret-optimal control problem can be summarized as follows.
\begin{problem}[Regret-Optimal Control]\label{prob:regret} Find a strictly-causal policy $\pi$ that solves the optimization problem
    \begin{align}
    {\regret} = \inf_{\pi\in\Pi^{\text{S.C.}}}\regret(\pi).
    \end{align}
\end{problem}
In Section \ref{sec:main}, Problem \ref{prob:regret} is completely solved in terms of explicit formulae for the optimal regret and construction of a regret-optimal controller. A solution to the regret-optimal control problem in the causal scenario can be found in \cite{sabagFIACC}.

% The optimal regret$^\ast$ in \eqref{eq:def_opt_regret} characterizes the inherent loss in terms of the cost of the best online controller compared to its best noncausal counterpart. Unlike most earlier papers that only give order-wise results on the optimal regret, we are able to solve Problem \ref{prob:regret} {\em exactly}. In other words, we provide an explicit formula to compute regret$^\ast$ and the optimal online policy in both the causal strictly causal scenarios.

\subsection{Regret-optimal control in operator form}\label{subsec:op}
In this section, we present the regret problem from an operator theory perspective. This leads to clean exposition and insightful comparison with $\mathcal H_\infty$ control. The state-space in \eqref{eq:dynamical_sys} is a special case of this formulation. Consider a linear system
\begin{align}\label{eq:operator_sys}
    s = Fv+Gw,
\end{align}
where $F$ and $G$ are causal (lower triangular) block operators. The sequence $w$ corresponds to the disturbance, $s$ is the state sequence and $v$ is the control sequence. A policy (controller) is a mapping from the sequence $w$ to the sequence~$v$. For a fixed policy, the quadratic cost is 
\begin{align}\label{eq:cost_op}
    \text{cost}_{OP}(\pi;w) \triangleq \|s\|_2^2 + \|v\|_2^2.
\end{align}

\rr{The regret problem for the general formulation in \eqref{eq:operator_sys}-\eqref{eq:cost_op} can be defined similarly to Problem \ref{prob:regret} as
\begin{equation}\label{eq:regret_OP_def}
\inf_{\pi\in\Pi^{\text{S.C.}}}\sup_{\|w\|_2\le1}\left( \text{cost}_{OP}(\pi;w)-\inf\limits _{\pi'\in\Pi^{\text{N.C.}}} \text{cost}_{OP}(\pi';w)\right).
\end{equation}
}
Note that the time-horizon here can be either finite, semi-infinite, or the doubly-infinite regime in \eqref{eq:cost_def_DoublyInf}.

The following result characterizes the optimal non-causal policy $\pi'\in\Pi^{\text{N.C.}}$ in \eqref{eq:regret_OP_def}, which is shown to be linear. 
\begin{theorem}[{The non-causal controller \cite[Th. $11.2.1$]{hassibi1999indefinite}}]  \label{th:noncausal}
The optimal non-causal controller \rr{that minimizes the quadratic cost in \eqref{eq:cost_op}} is linear and is given by $v = K_0w$, where $K_0$ is the linear operator 
\begin{align}\label{eq:non-causal_cont_op}
    K_0 = -(I + F^{*}F)^{-1}F^{*}G.
\end{align}
\end{theorem}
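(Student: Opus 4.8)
The plan is to derive the optimal non-causal controller by viewing the cost in \eqref{eq:cost_op} as a quadratic functional of the control sequence $v$ and minimizing it directly, treating $w$ as a fixed (but known) external input. Substituting $s = Fv + Gw$ into \eqref{eq:cost_op} gives
\begin{align}\label{eq:proof_plan_cost}
\text{cost}_{OP}(\pi;w) = \|Fv + Gw\|_2^2 + \|v\|_2^2 = v^\ast(I + F^\ast F)v + 2\,\mathrm{Re}\,(v^\ast F^\ast G w) + w^\ast G^\ast G w.
\end{align}
Since $I + F^\ast F \succ 0$, this is a strictly convex quadratic in $v$, so the unique minimizer is found by completing the square (equivalently, setting the first variation to zero): the stationarity condition $(I + F^\ast F)v + F^\ast G w = 0$ yields $v = -(I + F^\ast F)^{-1} F^\ast G w$, which is exactly $v = K_0 w$ with $K_0$ as in \eqref{eq:non-causal_cont_op}. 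Because $K_0$ acts linearly on $w$, the optimal non-causal policy is linear, establishing the claim.

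The steps, in order, would be: (i) substitute the dynamics $s = Fv + Gw$ into the quadratic cost; (ii) observe that the resulting functional is strictly convex in $v$ because $I + F^\ast F$ is positive definite and boundedly invertible (so the minimization is well posed even in the infinite-dimensional operator setting); (iii) complete the square to write $\text{cost}_{OP} = (v - K_0 w)^\ast (I + F^\ast F)(v - K_0 w) + w^\ast\bigl(G^\ast G - G^\ast F (I + F^\ast F)^{-1} F^\ast G\bigr)w$; (iv) conclude that the minimum over all (non-causal, not necessarily linear) policies $\pi'$ is attained uniquely at $v = K_0 w$, and read off linearity. One should also note for completeness that the residual cost operator $G^\ast G - G^\ast F(I+F^\ast F)^{-1}F^\ast G \succeq 0$, which confirms the minimum is finite and gives the optimal non-causal cost as a byproduct (useful later for the regret analysis).

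The main subtlety — rather than obstacle — is making the completion-of-the-square argument rigorous at the operator level: one must ensure that $(I + F^\ast F)^{-1}$ exists as a bounded operator on $\ell_2$ (immediate since $I + F^\ast F \succeq I$) and that the manipulations are valid when $v$ ranges over an infinite-dimensional space and the policy is permitted to be an arbitrary, possibly nonlinear, non-causal map. The key point is that no causality or structural constraint is imposed on $\pi'$, so the unconstrained pointwise-in-$w$ minimization over $v$ is legitimate and the optimum is genuinely $v = K_0 w$; had a causality constraint been present, a projection onto the causal subspace would be needed instead, which is precisely the complication the regret formulation will later have to confront. Since this theorem is quoted from \cite[Th.~$11.2.1$]{hassibi1999indefinite}, it suffices to present this derivation as the argument; no deeper machinery is required.
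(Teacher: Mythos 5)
Your proposal is correct and follows essentially the same route as the paper's proof in Appendix A: for each fixed $w$, minimize the strictly convex quadratic in $v$ by completing the square using the bounded invertibility of $I+F^\ast F$, noting that no causality constraint is imposed, so the unique minimizer $v=-(I+F^\ast F)^{-1}F^\ast G w$ is linear in $w$. The only cosmetic difference is that the paper also rewrites the residual term via the identity $I-F(I+F^\ast F)^{-1}F^\ast=(I+FF^\ast)^{-1}$ to obtain the optimal cost as $w^\ast G^\ast(I+FF^\ast)^{-1}Gw$, which your form $w^\ast\bigl(G^\ast G-G^\ast F(I+F^\ast F)^{-1}F^\ast G\bigr)w$ equals.
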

For completeness, Theorem \ref{th:noncausal} is proved in Appendix \ref{app:non-causal}. %\bb{\cite[Appendix~\ref{app:non-causal}]{extended}}. 
\rr{In Theorem \ref{th:linear_is_optimal} below, we show that the policy $\pi\in\Pi^{S.C}$ that minimizes the regret in the infinite-horizon regime of \eqref{eq:regret_OP_def} is also linear.} Thus, we focus on linear controllers so as to simplify the regret in \eqref{eq:regret_OP_def}. Consider a linear controller $K$, and its cost operator can be defined as
\begin{align}
    \left[ \begin{array}{c} s \\ v \end{array} \right] = \underbrace{\left[ \begin{array}{c} FK+G \\ K \end{array} \right]}_{\triangleq T_K} w
    \label{transfer_operator}
\end{align}
to compactly expressed its quadratic cost as 
$$\text{cost}_{OP}(K;w) = w^*T_K^*T_K w.$$

We can now use the completion of the square to express, for any linear controller $K$, its squared cost operator as
\begin{align}\label{eq:operator_equiv}
    T_K^\ast T_K & =  (K - K_0)^* (I+F^*F) (K - K_0) + T_{K_0}^*T_{K_0}
\end{align}
with
$$T_{K_0}^*T_{K_0} = G^*(I + FF^*)^{-1}G.$$
Note that \eqref{eq:operator_equiv} implies $T_K^\ast T_K\succeq T_{K_0}^*T_{K_0}$, so that the non-causal controller $K_0$ outperforms (in terms of LQR cost) any linear controller $K$ {\em for any} $w$.

It is interesting to compare the objectives of regret-optimal control and the classical robust control. In both formulations there is a maximization over $w$ that can be replaced with an operator norm, and their objectives are
\begin{align}\label{eq:comparison_robust}
    \underbrace{\inf_{\text{s. causal $K$}} \|T_K^*T_K\|}_{\mbox{$\mathcal H_\infty$ control}} ~~,~~
    \underbrace{\inf_{\text{s. causal $K$}} \|T_K^\ast T_K - T_{K_0}^\ast T_{K_0}\|}_{\mbox{regret-optimal control}}.
\end{align}
The difference is transparent; in $\mathcal H_\infty$ control, one aims to minimize the worst-case gain from the disturbance energy to the control cost, whereas in regret-optimal control one attempts to minimize the worst-case gain from the disturbance energy to the regret. \rr{This latter fact makes the regret-optimal controller competitive with respect to the non-causal controller since it has as its baseline the best that any controller can do, whereas the $\mathcal H_\infty$ controller has no baseline to measure itself against.} Comparison of regret-optimal control and $\mathcal H_2$ control is discussed after Theorem \ref{th:stricrly_regret}.

The state-space model is a special case of \eqref{eq:operator_sys}. Specifically, if we choose the causal operators $F$ and $G$ to be lower triangular, doubly-infinite block Toeplitz operators with Markov parameters $F_i = Q^{1/2}A^{i-1}B_uR^{-1/2}$ and $G_i = Q^{1/2}A^{i-1}B_w$, respectively, for $i>0$. Furthermore, the weight matrices $Q = Q^{*/2}Q^{1/2}$ and $R = R^{*/2}R^{1/2}$ can be absorbed in the transformed state and control variables~$s = \{Q^{1/2}x_t\}$ and $v = \{R^{1/2}u_t\}$, respectively. The LQR cost in \eqref{eq:cost_def_DoublyInf} is now in the required form $$\cost(\pi; w) = \|s\|_2^2 + \|v\|_2^2.$$ 
\rr{The regret problem in \eqref{eq:def_regret} is a special case of its operator formulation counterpart in \eqref{eq:regret_OP_def}. Combining \eqref{eq:regret_OP_def} with the optimality of the non-causal and causal controllers (shown below in Theorem \ref{th:linear_is_optimal}), we can write the regret in Problem \ref{prob:regret} as
\begin{align}
    \regret&= \inf_{\text{s. causal $K$}} \|T_K^\ast T_K - T_{K_0}^\ast T_{K_0}\|,
\end{align}
where $K$ is a linear controller.}

% Further, we can represent the linear causal policy $\pi$, mapping $w$ to $v$, by a causal (lower triangular) doubly infinite operator $K$, i.e., $v = Kw$. In this case, the transfer operator, $T_K$, mapping the disturbance $w$ to the sequences $s$ and $v$ is given by
% \begin{align}
%     \left[ \begin{array}{c} s \\ v \end{array} \right] = \underbrace{\left[ \begin{array}{c} FK+G \\ K \end{array} \right]}_{T_K} w.
%     \label{transfer_operator}
% \end{align}
% With this definition, the cost is just $T_K^\ast T_K$.
% We first derive the offline optimal control policy $\pi^*$ and show that it is a linear function of $w$. By this result, we can also provide an alternative expression for the squared operator of an arbitrary controller in terms of $K_0$.
\subsection{The Nehari problem}\label{subsec:preli_nehari}
Before proceeding to the main results, we present a problem fundamental to the solution of regret-optimal control.
\begin{problem} [\bf Nehari Problem \cite{nehari1957bounded}]\label{prob:nehari} Given a strictly anti-causal (strictly upper triangular) doubly-infinite block Toeplitz operator $U$, find a causal (lower triangular) doubly-infinite block Toeplitz operator $L$, such that $\|L-U\|$ is minimized.
\end{problem}
The Nehari problem seeks the best causal approximation to a strictly anti-causal operator in the operator norm sense. Nehari showed the minimal norm can be characterized by the Hankel norm of an operator \cite{nehari1957bounded}. As we will see in Theorem \ref{th:Nehari_general} and its application to our problem in the next section, when the operator has a state-space structure, the minimal norm and the approximation $L$ can be found explicitly.

% \bb{Throughout the paper, we will occasionally refer to a \emph{$\gamma$-optimal solution} for the Nehari problem, that is, a solution $L_\gamma$ that achieves $\|L_\gamma - U\|\le\gamma$, when such a solution exists.}

\section{Main results}\label{sec:main}
This section includes our main results. In Section \ref{subsec:main_nehari_reduction}, we present the regret as a Nehari problem \rr{and the optimality of linear controllers}. We then present the optimal regret value and the regret-optimal controller for the state-space setting.

\subsection{Reduction to a Nehari problem} \label{subsec:main_nehari_reduction}
% Now that we have determined that the optimal offline policy is linear, we can rewrite Problem \ref{prob:regret} as
% \begin{align}
%     \inf_{\text{causal} \ K} \sup_{ \|w\|_2 \leq 1} \left(w^*T_K^*T_Kw - w^*T_{K_0}^*T_{K_0}w\right) = \inf_{\mbox{causal $K$}} \sigma_{\text{max}}^{2}\left(T_K^*T_K - T_{K_0}^*T_{K_0}\right),
% \end{align}
% where $\sigma_{\text{max}}(\cdot)$ represents the operator norm.

%  We therefore have
% $$
% T_K^*T_K-T_{K_0}^*T_{K_0} = (\Delta K-\Delta K_0)^*(\Delta K-\Delta K_0),
% $$
% and therefore $\sigma_{\text{max}}(T_K^*T_K-T_{K_0}^*T_{K_0}) = \sigma_{\text{max}}^2(\Delta K-\Delta K_0)$. We are now in a position to state the Nehari problem.
The following theorem presents the relation between the Nehari problem and regret when restricted to linear controllers.
\begin{theorem}[Regret as a Nehari problem]\label{th:reg_as_Nehari}
The regret problem \rr{with linear controllers in \eqref{eq:comparison_robust}} can be formulated as the Nehari problem
\begin{align}\label{eq:th_op_nehari}
    \inf_{\mbox{s.causal $K$}} \|T_K^\ast T_K - T_{K_0}^\ast T_{K_0}\|&= \mspace{-10mu}\inf_{\mbox{s.causal $L$}}\|L - \left\{\Delta K_0\right\}_-\|^2,
\end{align}
where $\Delta$ is given by the canonical factorization~$\Delta^\ast\Delta = I+F^\ast F$, $K_0$ is the optimal non-causal controller \rr{in \eqref{eq:non-causal_cont_op},} and $\{\cdot\}_-$ denotes the anti-causal part of a linear operator.

Furthermore, let $L$ be a solution to the Nehari problem in \eqref{eq:th_op_nehari}, then a regret-optimal \rr{linear} controller is given by
\begin{align}\label{eq:th_reduction_con}
    K = \Delta^{-1}\left(L + \left\{\Delta K_0\right\}_+\right),
\end{align}
where $\{\cdot\}_+$ denotes the strictly causal part of an operator.
\end{theorem}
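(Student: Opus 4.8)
The plan is to manipulate the objective $\|T_K^\ast T_K - T_{K_0}^\ast T_{K_0}\|$ using the completion-of-squares identity \eqref{eq:operator_equiv}, reduce it to a problem about approximating a fixed anti-causal operator by a causal one, and then identify the causal free parameter. First I would substitute \eqref{eq:operator_equiv} into the left-hand side of \eqref{eq:th_op_nehari} to get
\begin{align*}
\|T_K^\ast T_K - T_{K_0}^\ast T_{K_0}\| = \|(K-K_0)^\ast(I+F^\ast F)(K-K_0)\| = \|\Delta(K-K_0)\|^2,
\end{align*}
using the canonical factorization $\Delta^\ast\Delta = I+F^\ast F$ and the fact that $\|M^\ast M\| = \|M\|^2$. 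So the regret is $\inf_{\text{s.causal }K}\|\Delta(K-K_0)\|^2$, and it remains to minimize $\|\Delta K - \Delta K_0\|$ over strictly causal $K$.

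Next I would change variables. Since $\Delta$ is obtained from a canonical (outer/spectral) factorization of the boundedly invertible operator $I+F^\ast F$, it is causal with causal inverse $\Delta^{-1}$. Writing $\Delta K = \{\Delta K_0\}_+ + \{\Delta K_0\}_- + (\Delta K - \{\Delta K_0\}_+)$ and noting that for strictly causal $K$ the product $\Delta K$ is strictly causal, I want to set $L \triangleq \Delta K - \{\Delta K_0\}_+$. The key point is that $L$ ranges exactly over all strictly causal operators as $K$ ranges over all strictly causal operators: given strictly causal $L$, the operator $K = \Delta^{-1}(L + \{\Delta K_0\}_+)$ must be shown to be strictly causal — this is where I would use that $\Delta^{-1}$ is causal and that $\{\Delta K_0\}_+$ is strictly causal, so $L + \{\Delta K_0\}_+$ is strictly causal and left-multiplication by the causal $\Delta^{-1}$ keeps it strictly causal. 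With this substitution,
\begin{align*}
\|\Delta K - \Delta K_0\| = \|L + \{\Delta K_0\}_+ - \Delta K_0\| = \|L - \{\Delta K_0\}_-\|,
\end{align*}
so the infimum over strictly causal $K$ equals $\inf_{\text{s.causal }L}\|L - \{\Delta K_0\}_-\|$, and squaring gives \eqref{eq:th_op_nehari}. The controller formula \eqref{eq:th_reduction_con} is then just the inverse change of variables, $K = \Delta^{-1}(L + \{\Delta K_0\}_+)$, read off from the optimal $L$.

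The main obstacle I anticipate is the careful bookkeeping of causality under the change of variables: one must confirm that $\Delta$ from the canonical factorization is genuinely causal with a bounded causal inverse (so that conjugating by $\Delta$ and $\Delta^{-1}$ is a bijection between strictly causal operators), and that $\{\Delta K_0\}_-$ is strictly anti-causal so that \eqref{eq:th_op_nehari} is a bona fide Nehari problem in the sense of Problem \ref{prob:nehari}. A secondary subtlety is that $\{\Delta K_0\}_+$ should be the \emph{strictly} causal part while $\Delta K_0$ itself may have a nonzero diagonal term; I would track which pieces are strictly causal versus merely causal so that the decomposition $\Delta K_0 = \{\Delta K_0\}_+ + \{\Delta K_0\}_-$ (with no diagonal term, since $\Delta K_0$ turns out to have no instantaneous part when $K$ is strictly causal — or absorbing any diagonal into $\{\Delta K_0\}_+$) is handled consistently. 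Everything else is a direct substitution.
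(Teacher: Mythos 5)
Your proposal is correct and takes essentially the same route as the paper's proof: completion of squares via \eqref{eq:operator_equiv}, the canonical factorization $I+F^\ast F=\Delta^\ast\Delta$ with $\Delta$ causal and causally (boundedly) invertible, and the change of variables $L=\Delta K-\{\Delta K_0\}_+$, which is a bijection between strictly causal $K$ and strictly causal $L$. The diagonal subtlety you flag dissolves under the paper's convention that $\{\cdot\}_+$ is the \emph{strictly} causal part and $\{\cdot\}_-$ the anticausal part including the diagonal, so $\Delta K_0=\{\Delta K_0\}_+ +\{\Delta K_0\}_-$ holds exactly (the resulting target $\{\Delta K_0\}_-$ is anticausal rather than strictly anticausal, a mismatch with Problem \ref{prob:nehari} that the paper later resolves by a shift in the $z$-domain).
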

Nehari showed that the minimal value in \eqref{eq:th_op_nehari} is the Hankel norm of the anticausal operator $\left\{\Delta K_0\right\}_-$ \cite{nehari1957bounded}. It is rather involved to compute the norm and the causal operator $L$ unless the operator has a structure. In Section \ref{subsec:general_nehari}, we show that the Nehari problem in \eqref{eq:th_op_nehari} can be solved explicitly when the noncausal operator has a state-space structure which leads to explicit solution of the regret-optimal control problem. Theorem \ref{th:reg_as_Nehari} also reveals the steps required to derive the regret-optimal controller: a factorization of the positive operator $I+F^\ast F$ and a decomposition of $\Delta K_0 = \{\Delta K_0\}_+ + \{\Delta K_0\}_-$. 

To assess the difference between regret-optimal and $\mathcal H_2$ control, we express the non-causal controller as
\begin{align}
   \Delta^{-1}\Delta K_0  &= \Delta^{-1} (\{\Delta K_0\}_- + \{\Delta K_0\}_+).
\end{align}
It can be shown that the expression $\Delta^{-1}\{\Delta K_0\}_+$ is precisely the optimal law in $\mathcal H_2$ control (in the case of a state-space model, it is the LQR state-feedback law). In other words, in $\mathcal H_2$, the anticausal term $\{\Delta K_0\}_-$ is eliminated from the optimal controller. However, in regret-optimal control, the anti-causal term $\{\Delta K_0\}_-$ is approximated with a causal operator $L$ using a Nehari problem. This relation is translated in Theorem \ref{th:stricrly_regret} to show that the regret-optimal controller is a sum of the $\mathcal H_2$ controller and a controller driven by a \emph{Nehari solution}.  

In Theorem \ref{th:reg_as_Nehari}, the controllers are restricted to be linear. We proceed to show that there is no loss of optimality in considering linear controllers. 
\rr{\begin{theorem}[Optimality of linear controllers]\label{th:linear_is_optimal}
The linear regret-optimal controller in \eqref{eq:th_reduction_con} attains the optimal regret in~\eqref{eq:regret_OP_def}. That is, there is no loss optimality when restricting to optimize over linear policies:
\begin{align}
&\inf_{\pi\in\Pi^{\text{S.C.}}}\sup_{\|w\|_2\le1}\left( \text{cost}_{OP}(\pi;w)-\inf\limits _{\pi'\in\Pi^{\text{N.C.}}} \text{cost}_{OP}(\pi';w)\right)
\nn\\
&= \inf_{\emph{s.causal $K$}} \|T_K^\ast T_K - T_{K_0}^\ast T_{K_0}\|.
\end{align}
\end{theorem}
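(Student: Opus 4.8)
\textit{Proof idea.} The plan is to reduce, via completion of squares, the infinite-horizon regret of an \emph{arbitrary} (possibly nonlinear) strictly-causal policy to a Nehari-type distance, and then to show that no nonlinear choice can do better than the best linear (block-Toeplitz) approximant, whose value has already been identified in Theorem~\ref{th:reg_as_Nehari}. The completion of squares behind \eqref{eq:operator_equiv} is a purely algebraic identity, valid for any $v,w\in\ell_2$:
$$\|Fv+Gw\|_2^2+\|v\|_2^2=\bigl\|\Delta(v-K_0w)\bigr\|_2^2+w^\ast T_{K_0}^\ast T_{K_0}w,\qquad \Delta^\ast\Delta=I+F^\ast F,$$
with $\Delta$ the canonical factor and $K_0$ as in \eqref{eq:non-causal_cont_op}. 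Minimizing the left-hand side over $v\in\ell_2$ shows $\inf_{\pi'\in\Pi^{\text{N.C.}}}\text{cost}_{OP}(\pi';w)=w^\ast T_{K_0}^\ast T_{K_0}w$ \emph{pointwise} in $w$, with no linearity imposed on $\pi'$ (this strengthens Theorem~\ref{th:noncausal}). Hence for any strictly-causal $\pi$, writing $v=\pi(w)$ and setting $\phi(w):=\Delta\pi(w)-\{\Delta K_0\}_+w$, $U:=\{\Delta K_0\}_-$ (strictly anti-causal and, in the doubly-infinite regime, block-Toeplitz),
$$\regret(\pi)=\sup_{\|w\|_2\le1}\bigl\|\Delta(\pi(w)-K_0w)\bigr\|_2^2=\sup_{\|w\|_2\le1}\|\phi(w)-Uw\|_2^2 .$$
Since $\Delta,\Delta^{-1}$ are causal and $\{\Delta K_0\}_+$ is strictly causal, $\pi\mapsto\phi$ is a bijection from strictly-causal policies onto strictly-causal (possibly nonlinear) operators, so $\inf_{\pi\in\Pi^{\text{S.C.}}}\regret(\pi)=\inf_{\phi\ \text{s.\,causal}}\sup_{\|w\|_2\le1}\|\phi(w)-Uw\|_2^2$. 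Restricting this infimum to linear Toeplitz $\phi$ is exactly the Nehari problem \eqref{eq:th_op_nehari}, which gives the upper bound $\inf_\phi\sup_w(\cdot)\le\gamma^2$, where $\gamma^2:=\inf_{\text{s.\,causal }L}\|L-\{\Delta K_0\}_-\|^2$ equals, by Theorem~\ref{th:reg_as_Nehari}, $\inf_{\text{s.\,causal }K}\|T_K^\ast T_K-T_{K_0}^\ast T_{K_0}\|$.

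\textit{The matching lower bound} is the heart of the argument. Let $P_-,P_+$ be the projections onto the coordinates $t<0$ and $t\ge0$, and recall that the Nehari value $\gamma$ is the norm of the Hankel operator $H_U:=P_-UP_+$. Fix any strictly-causal $\phi$ of finite regret (hence $\phi$ is bounded on the unit ball) and test only with \emph{future-supported} disturbances $w=P_+w$, $\|w\|_2\le1$. For $t<0$, $(\phi(w))_t$ depends only on $\{w_s:s<t\}$, which all vanish, so $P_-\phi(w)=P_-\phi(\mathbf 0)=:c\in\ell_2$, independent of $w$; also $P_-Uw=H_Uw$. Hence
$$\sup_{\|w\|_2\le1}\|\phi(w)-Uw\|_2^2\ge\sup_{\substack{w=P_+w\\ \|w\|_2\le1}}\|c-H_Uw\|_2^2\ge\|c\|_2^2+\sup_{\substack{w=P_+w\\ \|w\|_2\le1}}\|H_Uw\|_2^2=\|c\|_2^2+\|H_U\|^2\ge\gamma^2,$$
the middle step following by replacing $w$ with $-w$ whenever $\langle c,H_Uw\rangle<0$. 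Taking the infimum over $\phi$ gives $\inf_\phi\sup_w(\cdot)\ge\gamma^2$, which together with the upper bound yields $\inf_{\pi\in\Pi^{\text{S.C.}}}\regret(\pi)=\gamma^2=\inf_{\text{s.\,causal }K}\|T_K^\ast T_K-T_{K_0}^\ast T_{K_0}\|$, as claimed; moreover the minimizing $\phi$ can be taken linear, so the linear controller \eqref{eq:th_reduction_con} is optimal.

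\textit{Main obstacle.} All of the reduction is bookkeeping on top of Theorems~\ref{th:noncausal} and~\ref{th:reg_as_Nehari}; the substantive point is the lower bound, i.e., certifying that nonlinearity is useless. Its three ingredients are: (i) it suffices to probe with future-supported disturbances; (ii) strict causality collapses the past component of the nonlinear response to the fixed vector $c=P_-\phi(\mathbf 0)$; and (iii) the $\pm w$ symmetrization discards $c$ and exposes the Hankel norm. One must also keep the regularity bookkeeping honest (finite regret $\Rightarrow$ $\phi$ bounded on the unit ball $\Rightarrow$ $c\in\ell_2$); if the regret is infinite the inequality $\ge\gamma^2$ is trivial.
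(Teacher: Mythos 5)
Your proposal is correct and follows essentially the same route as the paper: completion of squares to reduce regret to a generalized (possibly nonlinear) Nehari problem, an upper bound from the linear Nehari solution of Theorem~\ref{th:reg_as_Nehari}, and a matching lower bound obtained by probing with future-supported disturbances so that strict causality exposes the Hankel operator of $\{\Delta K_0\}_-$, whose norm equals the Nehari value. Your treatment is in fact slightly more careful than the paper's (which simply asserts $v_-=0$), since you handle the fixed offset $c=P_-\phi(\mathbf 0)$ via the $\pm w$ symmetrization — just note the sign condition there should be $\langle c,H_Uw\rangle>0$, a trivial slip.
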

The proof of Theorem \ref{th:linear_is_optimal} may be of independent interest since its derivation relies on studying a non-linear generalization of the Nehari problem in which linear approximations are shown to be optimal. The proofs of Theorems \ref{th:reg_as_Nehari} and \ref{th:linear_is_optimal} appear in Section \ref{sec:proof}.}

\subsection{The state-space setting}\label{subsec:main_SS}
Section \ref{subsec:main_nehari_reduction} dealt with regret-optimal control for general operators. Here we focus on the state-space setting in order to obtain explicit results.

Define $P\succeq 0$ as the unique stabilizing solution to the LQR Riccati equation
\begin{align}\label{eq:Riccati}
  P = Q + A^\ast P A - A^\ast  P B_u (R + B_u^\ast P B_u ) ^{-1} B_u^\ast P A, 
\end{align}
$K_{lqr}= (R + B_u^\ast P B_u )^{-1} B_u^\ast P A$ is the LQR controller and \rr{$A_K \triangleq A-B_uK_{lqr}$ is the closed-loop system}. The following is our main result.

\begin{theorem}[Regret-optimal control]\label{th:stricrly_regret}
The optimal regret in \eqref{prob:regret} is
\begin{align}\label{eq:proof_Nehari_SC}
    {\regret}&=\lambda_{\text{max}}(Z{\Pi}),
\end{align}
where $Z$ and $\Pi$ solve the Lyapunov equations
\begin{align}\label{eq:SC_regret_Lyapunov}
    Z &= A_K Z A_K^\ast + B_u (R + B_u^\ast P B_u)^{-1} B_u^\ast\nn\\
    {\Pi} &= A_K^\ast {\Pi} A_K + P B_w B_w^{\ast} P.
\end{align}
A regret-optimal strictly causal controller is given by
\begin{align}\label{eq:th_sc_con}
u_t&=  \hat{u}_t - K_{\text{lqr}} x_t,
\end{align}
where $K_{lqr}$ is given in \eqref{eq:Riccati}, $\hat{u}_t$ is given by
\begin{align}\label{eq:th_SC_uhat}
        \xi_{t+1} &= {F}_\gamma \xi_t + {K}_\gamma w_t \nn\\
        \hat{u}_t&= - (R + B_u^\ast PB_u)^{-1}B_u^\ast {\Pi} \xi_t
\end{align}
with the constants
\begin{align}\label{eq:th_strictly_Nehari_SS}
    {K}_\gamma &=  (I - A_K Z_\gamma A_K^\ast {\Pi})^{-1}A_K Z_\gamma P B_w\nn\\
    {F}_\gamma &= A_K - {K}_\gamma B_w^\ast P,
\end{align}
and $Z_\gamma$ is the solution to the Lyapunov equation 
\begin{align}\label{eq:Z_gamma}
 Z_\gamma &= A_K Z_\gamma A_K^\ast + \gamma^{-2}B_u (R + B_u^\ast P B_u)^{-1} B_u^\ast.
\end{align}
\rr{with $\gamma^2=\regret$}. 
\end{theorem}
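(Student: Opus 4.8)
The plan is to apply the general Nehari reduction of Theorem~\ref{th:reg_as_Nehari} to the state-space data and carry out the three ingredients it requires: (i) the canonical factorization $\Delta^\ast\Delta = I + F^\ast F$, (ii) the causal/anti-causal splitting of $\Delta K_0$, and (iii) the explicit solution of the resulting Nehari problem via Theorem~\ref{th:Nehari_general}. For (i), since $F$ is the block Toeplitz operator with Markov parameters $F_i = Q^{1/2}A^{i-1}B_uR^{-1/2}$, the operator $I+F^\ast F$ is the ``return difference'' operator of LQR, and the LQR Riccati equation~\eqref{eq:Riccati} is exactly the device that produces a causal, causally invertible spectral factor $\Delta$. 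Concretely I would write $\Delta$ as the Toeplitz operator with symbol built from $(R+B_u^\ast PB_u)^{1/2}$ and the closed-loop transition $A_K = A - B_uK_{lqr}$; this is the standard state-space spectral factorization and I would quote it (it is, e.g., in the Hassibi--Sayed--Kailath development that the paper already leans on). For (ii), one computes $\Delta K_0 = \Delta(-(I+F^\ast F)^{-1}F^\ast G) = -\Delta^{-\ast}F^\ast G$, and the claim that $\Delta^{-1}\{\Delta K_0\}_+$ is the LQR state-feedback law $-K_{lqr}x_t$ (already asserted in the text after Theorem~\ref{th:reg_as_Nehari}) identifies the causal part; the anti-causal remainder $\{\Delta K_0\}_-$ is a strictly anti-causal Toeplitz operator whose state-space realization I would read off in terms of $A_K$, $P$, $B_w$ and the weight $(R+B_u^\ast PB_u)$.

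Next I would feed that realization of $U \triangleq \{\Delta K_0\}_-$ into the explicit Nehari solution. The optimal value $\|L-U\|^2$ equals the squared Hankel norm of $U$, which for a realization with stable ``state matrix'' $A_K$, observability-type Gramian governed by $PB_wB_w^\ast P$ and controllability-type Gramian governed by $B_u(R+B_u^\ast PB_u)^{-1}B_u^\ast$ is precisely $\lambda_{\max}(Z\Pi)$ where $Z,\Pi$ solve the two Lyapunov equations in~\eqref{eq:SC_regret_Lyapunov}; this gives $\regret = \gamma^2 = \lambda_{\max}(Z\Pi)$. For the optimal approximant $L$, Theorem~\ref{th:Nehari_general} provides a state-space formula for the best causal $L$ at the critical level $\gamma$; substituting the Gramians $Z,\Pi$ and forming the $\gamma$-perturbed Gramian $Z_\gamma$ from~\eqref{eq:Z_gamma} yields the realization $(F_\gamma, K_\gamma)$ in~\eqref{eq:th_strictly_Nehari_SS}. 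Finally I would assemble the controller via~\eqref{eq:th_reduction_con}, $K = \Delta^{-1}(L + \{\Delta K_0\}_+)$: the $\{\Delta K_0\}_+$ piece contributes the $-K_{lqr}x_t$ term, and $\Delta^{-1}L$, after writing out the cascade of the $(F_\gamma,K_\gamma)$ system with $\Delta^{-1}$ (which just reintroduces the $A_K$ dynamics and the gain $(R+B_u^\ast PB_u)^{-1}B_u^\ast$), collapses to the $n$-th order auxiliary system~\eqref{eq:th_SC_uhat} with output matrix $-(R+B_u^\ast PB_u)^{-1}B_u^\ast\Pi$. That $\Delta^{-1}$ exactly cancels half of the cascade and leaves an $n$-dimensional (not $2n$-dimensional) realization is the small miracle that makes the final formula clean.

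The main obstacle, and the step I would spend the most care on, is bookkeeping the state-space realizations through the operator identities: getting the realization of $\{\Delta K_0\}_-$ right (including the correct weights and the role of $A_K$ versus $A$), matching the abstract Gramians in the Nehari theorem to the specific Lyapunov equations~\eqref{eq:SC_regret_Lyapunov} and~\eqref{eq:Z_gamma}, and then verifying that the $\Delta^{-1}$ composition in~\eqref{eq:th_reduction_con} telescopes down to a single $n$-dimensional system rather than a higher-order one. A secondary technical point is justifying that $\Delta^{-1}\{\Delta K_0\}_+$ is genuinely the LQR law (a completion-of-squares / inner-outer argument) and that all the operators involved are bounded and the Toeplitz/Hankel machinery of Theorem~\ref{th:Nehari_general} applies, which relies on $A_K$ being stable — guaranteed by $P$ being the \emph{stabilizing} solution of~\eqref{eq:Riccati}. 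Everything else is a routine, if lengthy, algebraic verification.
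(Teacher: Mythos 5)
Your proposal follows the paper's own route essentially step for step: the Nehari reduction of Theorem~\ref{th:reg_as_Nehari}, the LQR-Riccati spectral factorization of Lemma~\ref{lemma:Delta}, the causal/anticausal splitting of $\Delta K_0$ in Lemma~\ref{lemma:decomposition}, the explicit state-space Nehari solution of Theorem~\ref{th:Nehari_general} yielding $\lambda_{\text{max}}(Z\Pi)$, and the collapse of $K=\Delta^{-1}\left(L+\{\Delta K_0\}_+\right)$ to an $n$-dimensional realization by identifying one hidden state with $x_t$ (with optimality over nonlinear policies supplied, as in the paper, by Theorem~\ref{th:linear_is_optimal}). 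The only detail to watch is that $\{\Delta K_0\}_-$ is anticausal with a nonzero diagonal term rather than strictly anticausal, so Theorem~\ref{th:Nehari_general} must be applied to the shifted symbol $zT(z)$ and the strictly causal approximant recovered as $L(z)=z^{-1}L'(z)$, exactly as the paper does in Lemma~\ref{lemma:our_nehari}.
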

The explicit regret formula follows from our explicit solution to the Nehari problem in the state-space setup (Theorem \ref{th:Nehari_general}). The proof of Theorem \ref{th:stricrly_regret} appears in Section \ref{subsec:proof_ss}.
\rr{A computational advantage of regret-optimal control is that the optimal regret can be computed explicitly. This is in contrast to $\mathcal H_\infty$ control where indefinite factorizations are needed, and whose solution relies on a bisection method to determine the minimal norm in \eqref{eq:comparison_robust}.} Also, note that the solution only requires a solution to the standard LQR Riccati equation with two additional Lyapunov equations to obtain $Z_\gamma$ and $\Pi$.

Recall the optimal $\mathcal H_2$ (LQR) controller
\begin{align}\label{eq:state-feedback}
    u^{H_2}_t&= - K_{\text{lqr}} x_t.
\end{align}
The regret-optimal controller is the sum of \eqref{eq:state-feedback} and an additional state $\hat{u}_t$ driven by the state-space in \eqref{eq:th_SC_uhat}. This implies that, in contrast to $\mathcal H_2$ control, the regret-optimal controller depends on all past disturbances (or states).
\rr{
\begin{remark}\label{remark:states}
From a practical point of view, the strictly-causal controller in \eqref{eq:th_sc_con} can be implemented as a function of the system states without access to the underlying disturbances. In particular, note that $w_t$ only appears in \eqref{eq:th_SC_uhat} and $${K}_\gamma w_{t} = (I - A_K Z_\gamma A_K^\ast {\Pi})^{-1}A_K Z_\gamma P (x_{t+1} - A x_{t} - B_u u_{t}).$$
\end{remark}}

\begin{figure*}[t]
    \centering
    \begin{subfigure}[t]{0.3\textwidth}
\includegraphics[width=1\textwidth]{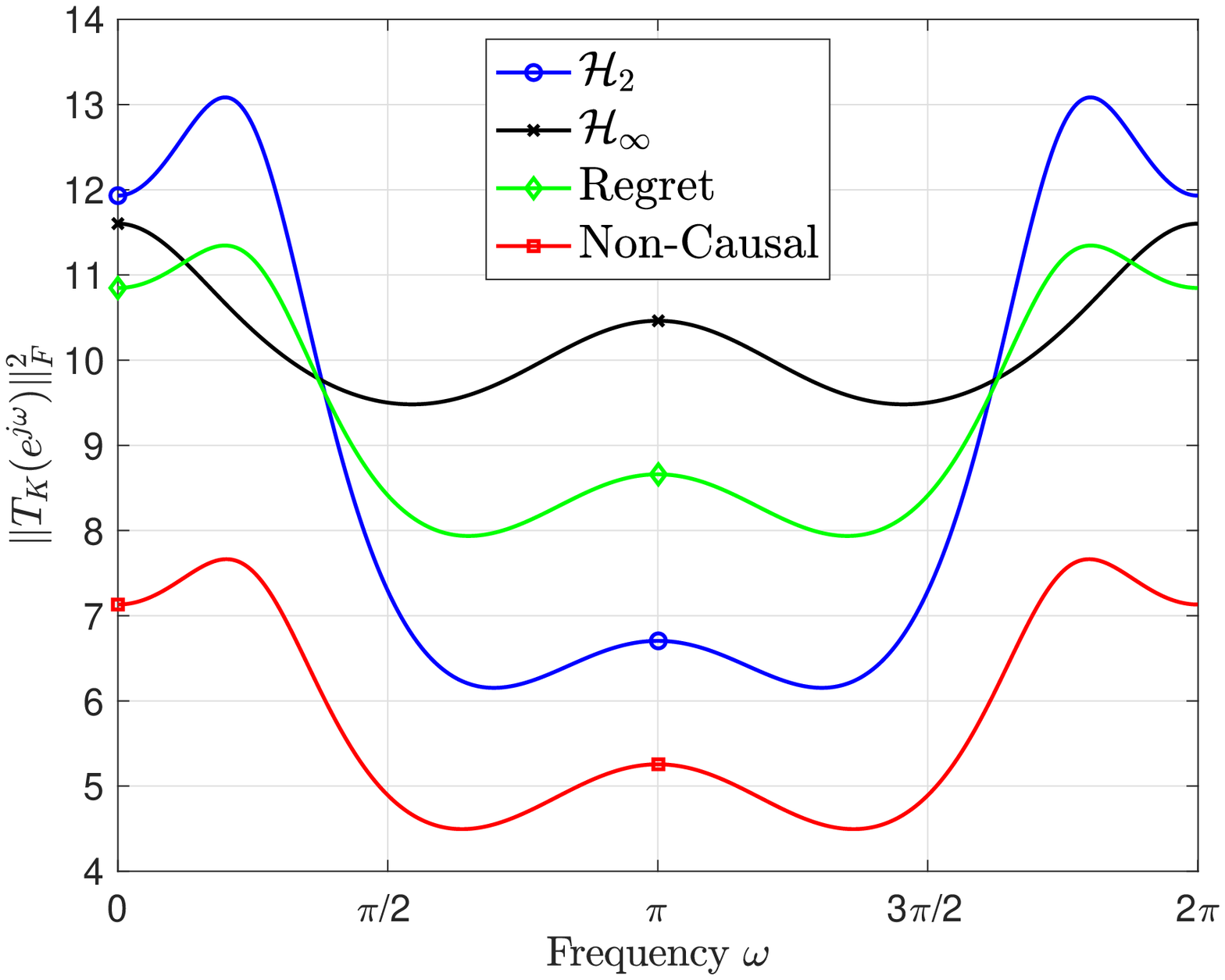}
\caption{Squared Frobenius Norm}
\end{subfigure}
\hspace{1em}
    \begin{subfigure}[t]{0.3\textwidth}
\includegraphics[width=1\textwidth]{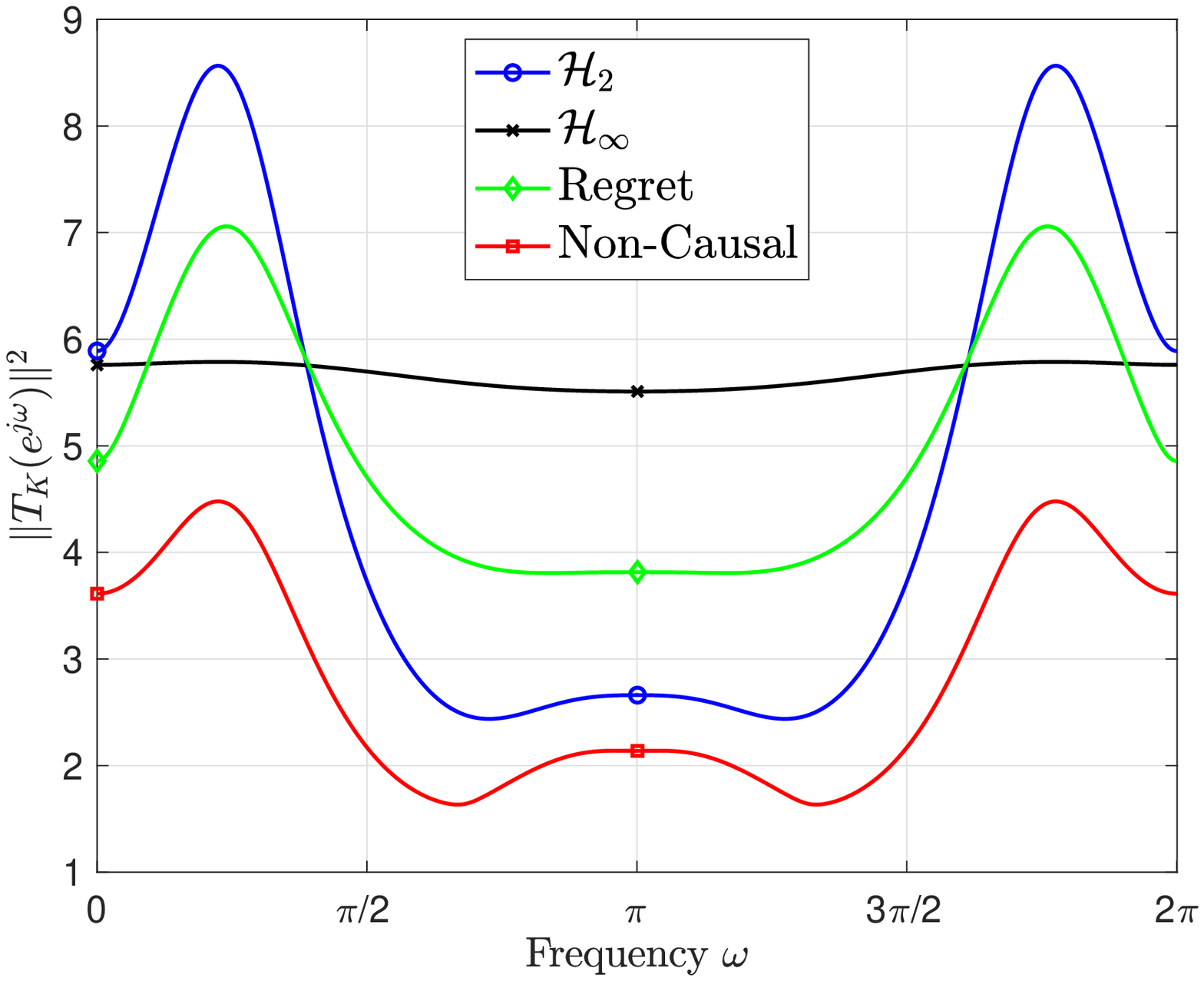}
\caption{Squared Operator Norm}
\end{subfigure}
\hspace{1em}
 \begin{subfigure}[t]{0.3\textwidth}
\includegraphics[width=1\textwidth]{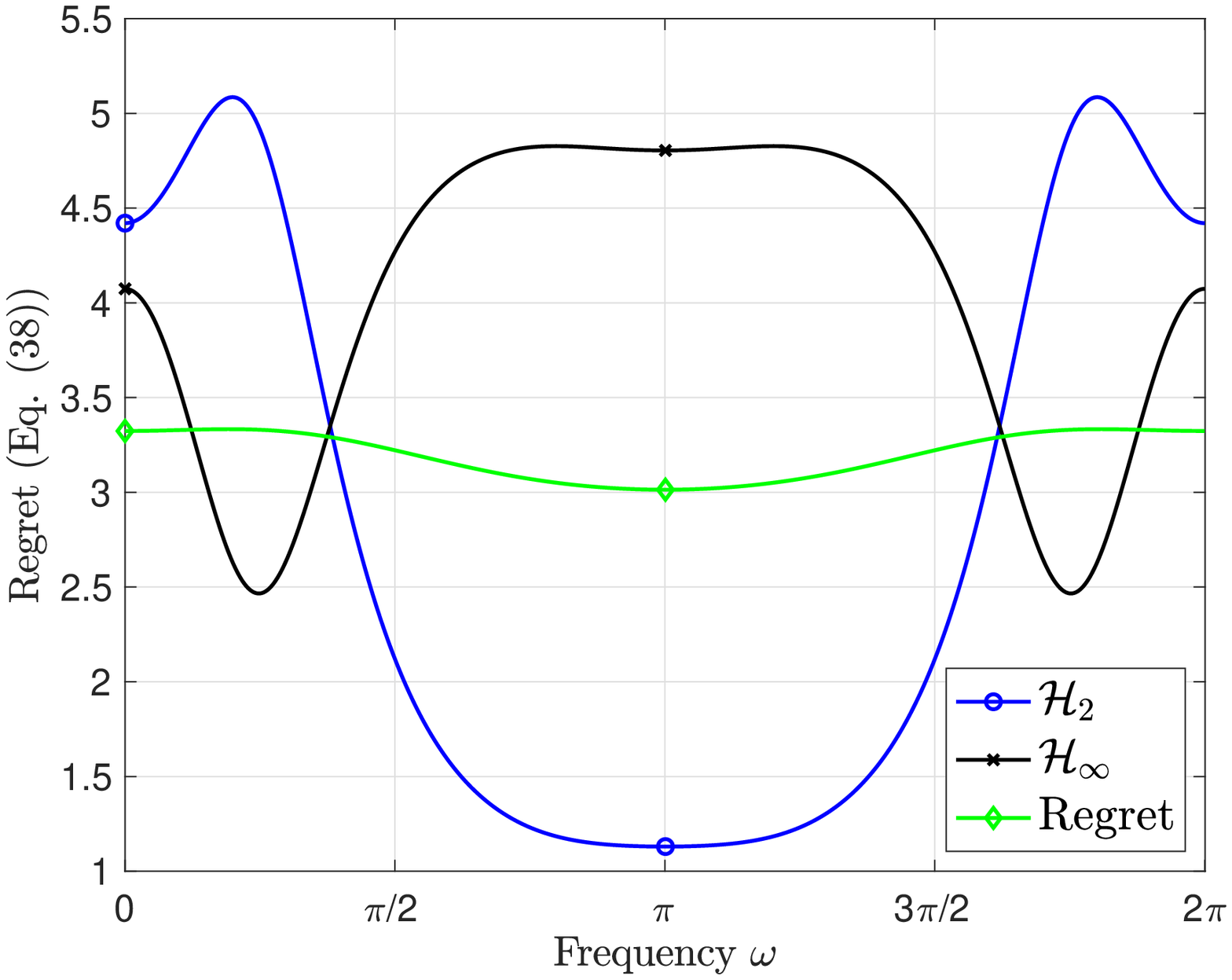}
\caption{Regret}
\end{subfigure}
    \caption{
    Performance metrics on $T_K(e^{j\omega})$ for a random system.}
    \label{fig:random}
    \vspace{-1.5em}
\end{figure*}

\section{Numerical Simulations} \label{sec:numerical}
In this section, we present the performance of the regret-optimal, the $\mathcal H_2$, and the $\mathcal H_\infty$ controllers for different systems. We present a frequency domain evaluation so as to compare the controllers across the full range of disturbances. We then show time-domain evaluations with various disturbances.
\subsection{Frequency-domain}\label{subsec:ex_freq}
The performance of any (linear) controller is governed by the transfer operator $T_K$ in \eqref{transfer_operator}. 
% It is useful to represent this operator via its transfer function in the $z$-domain, i.e.,
% \begin{align}\label{eq:ex_TKz}
% T_K(z) &= \left[\begin{array}{c} F(z)K(z)+G(z) \\ K(z) \end{array} \right],    
% \end{align}
The squared Frobenius norm of $T_K$, which is what $\mathcal H_2$ controller minimizes, is given by\looseness=-1
\begin{align}\label{eq:ex_TkFrob}
\|T_K\|_F^2 &= \frac{1}{2\pi}\int_0^{2\pi}\mbox{trace}\left(T_K^*(e^{j\omega})T_K(e^{j\omega})\right)d\omega,
\end{align}
the squared operator norm of $T_K$, which is what $\mathcal H_\infty$ controller minimizes, is given by
\begin{align}\label{eq:ex_Tknorm}
\|T_K\|^2 &= \max_{0\leq\omega\leq 2\pi} \sigma_{\max}\left(T_K^*(e^{j\omega})T_K(e^{j\omega})\right),
\end{align}
and the regret-optimal controller minimizes
\begin{align}\label{eq:ex_regret}
&\left\|T_K^*T_K-T_{K_0}^*T_{K_0}\right\| \\
&= \max_{0\leq\omega\leq 2\pi}\sigma_{\max}\left(T_K^\ast (e^{j\omega})T_K(e^{j\omega})-T_{K_0}^\ast(e^{j\omega})T_{K_0}(e^{j\omega})\right). \nn
\end{align}
To assess and compare the performance of the different controllers across the range of disturbances, we plot the arguments of the metrics in \eqref{eq:ex_TkFrob}-\eqref{eq:ex_regret} as a function of frequency $\omega$. We first consider a random time-invariant linear dynamical system with $n=6$ and $m=2$. All matrices, \textit{i.e.}, $A, B_u, B_w, Q, R$, are randomly generated and $A$ is unstable but $(A, B_u)$ is stabilizable. We construct the optimal non-causal, $\mathcal H_2$, $\mathcal H_\infty$, and regret-optimal controllers. Fig. \ref{fig:random} describes the performance of the various controllers.

\begin{table*}[b]
 \rr{
  \caption{Performance of the controllers in different systems. The best performance in each metric is highlighted\label{table_all}}}    
\resizebox{0.99\textwidth}{!}{{\begin{tabular}{ |c| 
c c c| c c c | c c c | 
 c c c | c c c | c c c | c c c
}
  &  
  & \textbf{HE1\cite{leibfritz2003description}} &  & & \textbf{AC15\cite{leibfritz2003description}} & &  & \textbf{REA1\cite{leibfritz2003description}} & \\ \hline
                  & $\mathbf{\|T_K\|_F^2}$ & $\mathbf{\|T_K\|^2}$ & \textbf{Regret} & $\mathbf{\|T_K\|_F^2}$ & $\mathbf{\|T_K\|^2}$ & \textbf{Regret} & $\mathbf{\|T_K\|_F^2}$ & $\mathbf{\|T_K\|^2}$ & \textbf{Regret}\\
                %   & $\mathbf{\|T_K\|_F^2}$ & $\mathbf{\|T_K\|^2}$ & \textbf{Regret}\\ 
                  \hline 
 Noncausal & 
%   $1.86\!\times\!10^1$ & $2.82\!\times\!10^1$ & 0 &
  $0.40\!\times\!10^0$ & $8.99\!\times\!10^1$ & ${0}$ & $7.29\!\times\!10^3$ & $1.46\! \times\!10^6$ & ${0}$ & $5.18\!\times\!10^1$ &  $2.05\!\times\!10^3$ & ${0}$\\
  
  Regret-optimal & 
%   $3.70\!\times\!10^1$ & $3.19\!\times\!10^1$& $2.22\!\times\!10^2$&
  $7.19\!\times\!10^1$ &  $1.61\!\times\!10^2$ &  $\mathbf{7.23\!\times\!10^1}$ &  $1.88 \!\times\! 10^4$ & $2.28\!\times\!10^6$ & $\mathbf{9.55\!\times\!10^5}$ & $3.38\!\times\!10^3$ & $5.30\!\times\!10^3$ & $\mathbf{3.32\!\times\!10^3}$
  \\ 
  
  $\mathcal H_2$ &
%   $3.08\!\times\!10^1$ & $3.71\!\times\!10^1$& $5.77\!\times\!10^2$&
  $\mathbf{1.09\!\times\!10^0}$  &  $3.11\!\times\!10^2$  & $2.21\!\times\!10^2$ & $\mathbf{1.61\!\times\!10^4}$& $2.72\!\times\!10^6$& $1.40\!\times\!10^6$& $\mathbf{2.62\!\times\!10^2}$   & $1.46\!\times\!10^4$ & $1.26\!\times\!10^4$  \\
  
 $\mathcal H_\infty$ &
%  $8.62\!\times\!10^1$ & $2.82\!\times\!10^1$ & $5.51\!\times\!10^2$ &
 $1.31\!\times\!10^2$ & $\mathbf{1.31\!\times\!10^2}$  & $1.31\!\times\!10^2$ & $1.41\!\times\!10^6$ & $\mathbf{2.19\!\times\!10^6}$ & $2.20\!\times\!10^6$ & $4.40\!\times\!10^3$  & $\mathbf{4.36\!\times\!10^3}$  & $4.36\!\times\!10^3$
 \\
 
 \hline 
%  \hline & & \textbf{REA1\cite{leibfritz2003description}} & & & 
% %  \textbf{AGS\cite{leibfritz2003description}} & & & 
%  \textbf{WEC1\cite{leibfritz2003description}} & & & \textbf{AC12\cite{leibfritz2003description}} & \\ \hline
%                   & $\mathbf{\|T_K\|_F^2}$ & $\mathbf{\|T_K\|^2}$ & \textbf{Regret} & $\mathbf{\|T_K\|_F^2}$ & $\mathbf{\|T_K\|^2}$ & \textbf{Regret} & $\mathbf{\|T_K\|_F^2}$ & $\mathbf{\|T_K\|^2}$ & \textbf{Regret} \\
%                 %   & $\mathbf{\|T_K\|_F^2}$ & $\mathbf{\|T_K\|^2}$ & \textbf{Regret}\\ 
%                   \hline
                  
% Noncausal &  $5.18\!\times\!10^1$ &  $2.05\!\times\!10^3$ & 0 & $4.23\!\times\!10^3$  & $4.01\! \times\!10^5$ & 0 & $6.65\!\times\!10^2$ & $8.29\! \times\!10^4$ & 0 \\
  
%   Regret-optimal & $3.38\!\times\!10^3$ & $5.30\!\times\!10^3$ & ${3.32\!\times\!10^3}$ &  $1.59 \!\times\! 10^5$  & $5.53\!\times\!10^5$ & ${1.56\!\times\!10^5}$ & $3.99 \!\times\! 10^4$ & $1.21\!\times\!10^5$ & ${3.97\!\times\!10^4}$ \\
  
%   $\mathcal H_2$ & ${2.62\!\times\!10^2}$   & $1.46\!\times\!10^4$ & $1.26\!\times\!10^4$  &  ${1.04\!\times\!10^4}$ & $9.59\!\times\!10^5$ & $5.65\!\times\!10^5$ &  ${2.76 \!\times\!10^3}$ & $1.92\!\times\!10^5$ & $1.27\!\times\!10^5$ \\

%  $\mathcal H_\infty$ & $4.40\!\times\!10^3$  & ${4.36\!\times\!10^3}$  & $4.36\!\times\!10^3$  & $4.20\!\times\!10^5$ & ${4.19\!\times\!10^5}$ & $4.19\!\times\!10^5$ & $8.29\!\times\!10^4$ & ${8.29\!\times\!10^4}$ & $8.29\!\times\!10^4$ \\ \hline
\end{tabular}}}
\vspace{-1em}
\end{table*}

\begin{figure*}[h]
% \captionsetup{labelfont=bf}
    \centering
    \begin{subfigure}[t]{0.31\textwidth}
\includegraphics[width=1.1\textwidth]{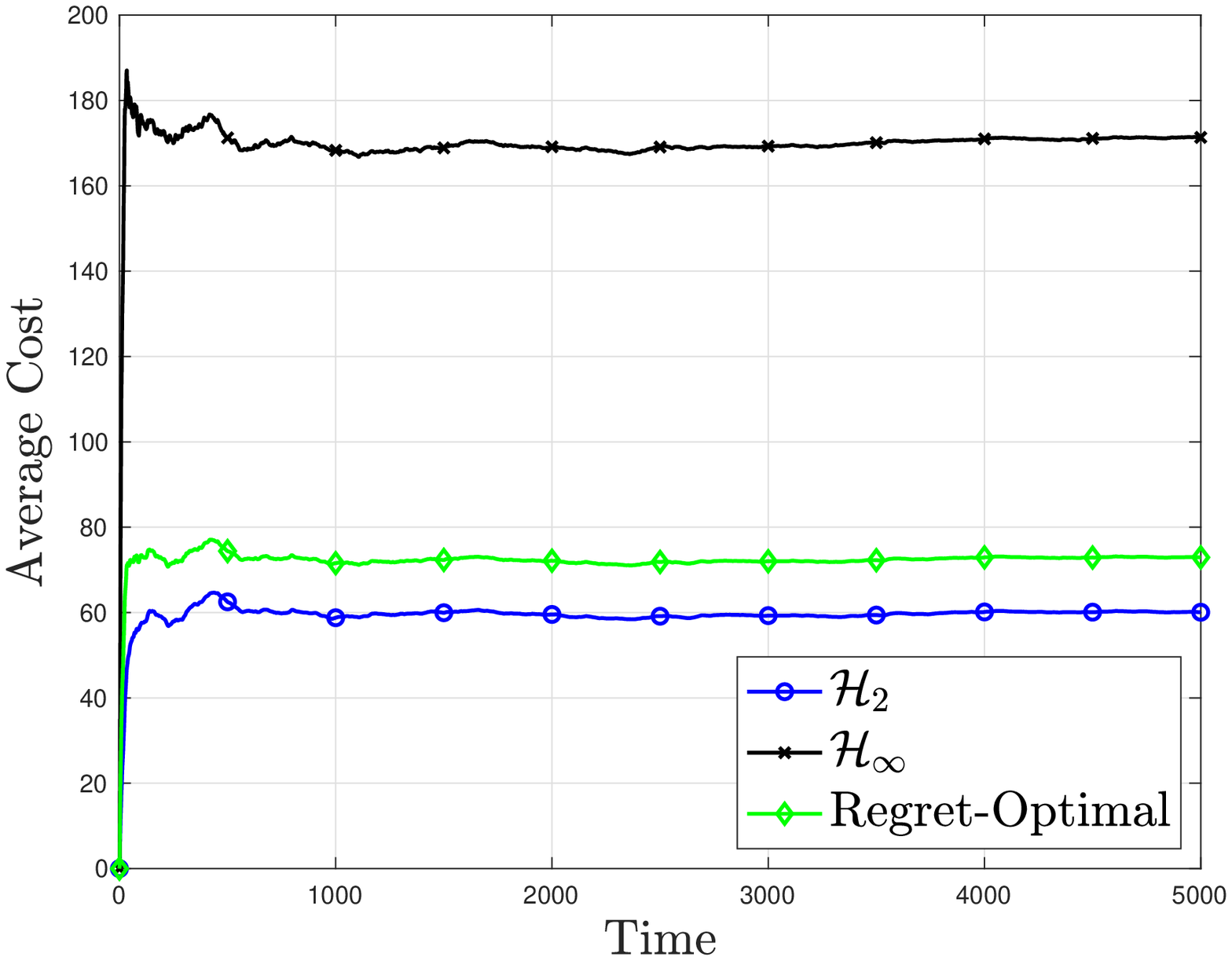}
\caption{White Noise}
\end{subfigure}
\begin{subfigure}[t]{0.31\textwidth}
\includegraphics[width=1.1\textwidth]{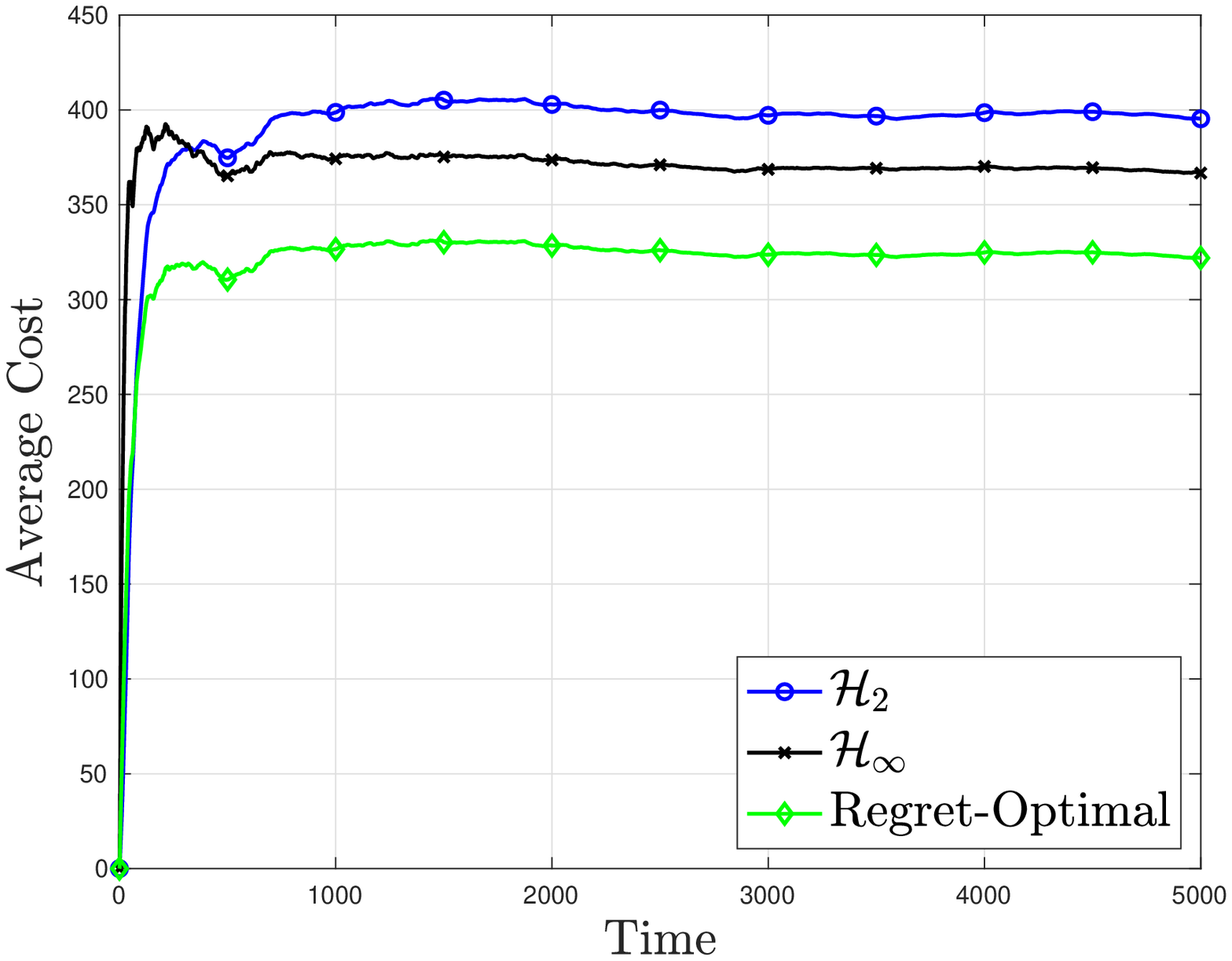}
\caption{White Noise with a small constant }
\end{subfigure}
\begin{subfigure}[t]{0.31\textwidth}
\includegraphics[width=1.1\textwidth]{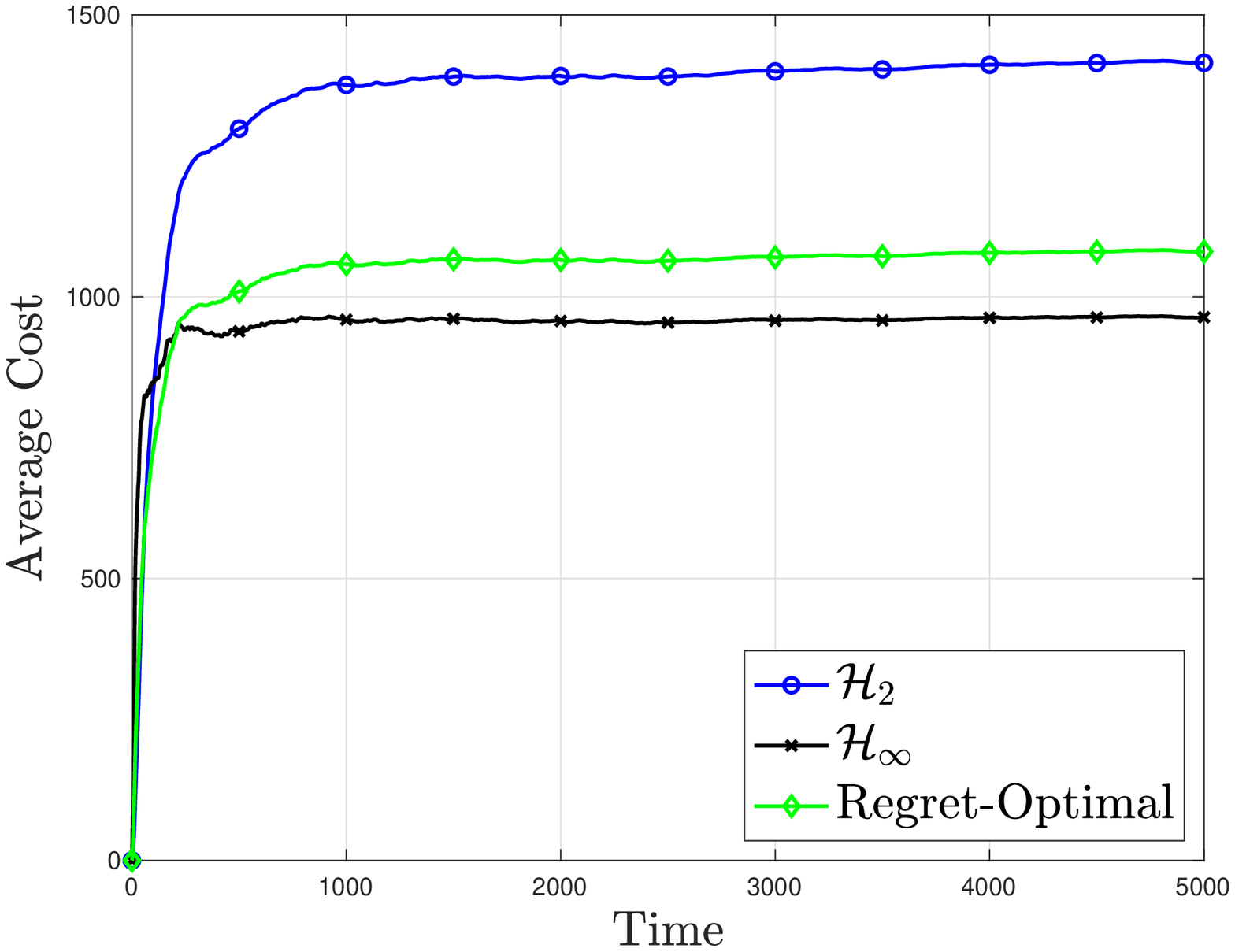}
\caption{White Noise with a large constant}
\end{subfigure}
\caption{The control cost of the Boeing 747 example under different disturbances. \textbf{(a)} The $\mathcal H_2$ controller outperforms other causal controllers. Note that the regret-optimal controller has a comparable performance with the $\mathcal H_2$ controller. \textbf{(b)} In this regime of a small DC component, the regret-optimal controller outperforms all causal controllers. \textbf{(c)} In this regime of a larger DC component, $\mathcal H_\infty$ controller outperforms all causal controllers. Note that regret-optimal performs noticeably close to $\mathcal H_\infty$.}
\label{fig:time_domain}
\end{figure*}

It is evident in Fig. \ref{fig:random}(a)-(b) that the non-causal controller outperforms the causal controllers in all metrics. The $\mathcal H_2$ controller attains the minimal Frobenius norm, which is the area under its curve in Fig. \ref{fig:random}(a) (it can be also viewed as the minimal expected cost with iid $w$). However, in doing so, it has relatively large cost for low frequencies and the highest peak. The $\mathcal H_\infty$ is a robust controller designed to minimize the operator norm, which is the peak of the per-frequency norm, Fig. \ref{fig:random}(b). However, in doing so, it sacrifices the average performance and has a relatively large area under the curve in Fig. \ref{fig:random}(a). Regret-optimal controller finds the best of both worlds and lies in between the performance of $\mathcal H_2$ and $\mathcal H_\infty$ or below, \textit{i.e.}, stays close to the best average performance of $\mathcal H_2$ and its peak is relatively close to the best peak of $\mathcal H_\infty$.\looseness=-1

Recall that the regret-optimal controller aims to stay as close as possible to the non-causal controller across all frequencies \rr{by minimizing its largest deviation from the latter}. In doing so, it achieves an area under the curve that is close to that of the $\mathcal H_2$-optimal controller ($9.34$ vs. $8.85$) and it has a peak that improves significantly upon the peak of the $\mathcal H_2$ controller. This demonstrates that the regret-optimal controller interpolates between the robust and the average performance of the $\mathcal H_2$ controller. It is interesting to compare the regret metric across the disturbances in Fig. \ref{fig:random}(c). Consistent with our theoretical claim, the regret-optimal control attains the smallest peak in the regret plot. Moreover, it maintains an almost-constant distance from the non-causal controller. This stands in contrast to the $\mathcal H_2$ and $\mathcal H_\infty$ controllers that may be closer to the non-causal for some frequencies but suffer suffer high regret in other regimes. This reveals the regret-optimality feature, by minimizing the largest cost deviation from the non-causal controller, it attains a balanced behavior across all input disturbances with respect to the (universal) benchmark. 

\rr{The behaviour illustrated above is indeed maintained in all studied examples including practical systems. We consider three linear time-invariant models: an aircraft dynamic (AC15), an helicopter model (HE1), and a chemical reactor model (REA1)~\cite{leibfritz2003description}. These models cover a wide range of applications and have various state and control input dimensions. Further details can be found in \cite{leibfritz2003description}. The different norms resulting for each controller are given in Table \ref{table_all}. It can be observed that in terms of the Frobenius norm, the regret-optimal controller improves the performance of the $\mathcal H_\infty$ controller while improving the performance of the $\mathcal H_2$ in terms of the operator norm.\looseness=-1}

\subsection{Time-domain evaluation}\label{subsec:ex_time}
In this section, we illustrate the performance of the regret-optimal controller in time-domain. We consider the longitudinal flight control of Boeing 747 with dynamics linearized at an altitude of $40000$ft with a speed of $774$ft/sec, with 1-second discretization. The parameters of the system dynamics are provided in \cite{boyd2018introduction}. 
% For level flight of Boeing $747$ at the altitude of 40000ft with the speed of 774ft/sec, and a discretization every 1 second, the dynamics can be represented with a linear dynamical system with 
% \begin{equation*}
%     A = \begin{bmatrix}
%     .99 & .03 & -.02 & -.32 \\
%     .01 & .47 & 4.7 & 0 \\
%     .02 & -.06 & .40 & 0 \\
%     .01 & -.04 & .72 & .99 
% \end{bmatrix} \quad 
% B_u = \begin{bmatrix}
%   0.01 & 0.99 \\
%   -3.44 & 1.66 \\
%   -0.83 & 0.44 \\
%   -0.47 & 0.25
% \end{bmatrix}
% \end{equation*}
% and $B_w = I$. 
We select $Q = I$ and $R = I$. %For this dynamical system, we construct the regret-optimal, the $\mathcal H_2$, and the $\mathcal H_\infty$ controllers. 

In Fig. \ref{fig:time_domain}, we present the cost attained by the different controllers with different noise disturbances. In all experiments, results are averaged over 30 independent trials. In Fig. \ref{fig:time_domain}\textbf{(a)}, the disturbance is a white Gaussian noise. As expected, the $\mathcal H_2$ controller outperforms all causal controllers. It can be noted that the regret-optimal controller has comparable performance to the $\mathcal H_2$, and significantly improves the cost of the $\mathcal H_\infty$. Next, we choose the disturbance as a white noise with a constant (i.e., zero-frequency DC signal). In this case, the power ratio between the Gaussian noise govern the performance. In Fig. \ref{fig:time_domain}\textbf{(b)}, an evaluation with \emph{small} DC component is presented: we extract the eigenvector that corresponds to the largest singular value of $T_K(e^{j\omega}=1)$ (for $\mathcal{H}_2$) and add it to a white Gaussian noise. Fig. \ref{fig:time_domain}\textbf{(b)} shows that the regret-optimal controller outperforms the $\mathcal H_2$ and $\mathcal H_\infty$ controllers. It can be also seen that the $\mathcal H_2$ and the $\mathcal H_\infty$ are close in their performance, a gap that will diminish if the DC weight is growing large. In Fig. \ref{fig:time_domain}\textbf{(c)}, the other scenario is evaluated when the power of the DC component is doubled and governs the white noise. In this scenario, the robust $\mathcal H_\infty$ controller outperforms the other controllers whereas $\mathcal H_2$ performs poorly. However, the regret-optimal controller performs close to $\mathcal H_\infty$ controller which demonstrates the best of both worlds behavior of the regret-optimal controller.\looseness=-1

% \begin{figure}[t]
%     \centering
%     \includegraphics[width=0.49\textwidth]{figures/high_dc_avg.eps}
%     \caption{Strong-DC Time-domain evaluation of the numerical example: The system is excited with a white Gaussian noise that is added to a DC component, i.e., a constant. In this regime, the weight on the constant is \emph{larger}, which causes $\mathcal H_\infty$ to outperform all causal controller. Note that regret-optimal performs noticeably close to $\mathcal H_\infty$ controller.}
%     \label{fig:time_domain_3}
% \end{figure}

\section{Proofs}\label{sec:proof}
In this section, we present: a reduction of the regret to a Nehari problem (Th. \ref{th:reg_as_Nehari}), our solution to the general Nehari problem (Sec. \ref{subsec:general_nehari}), and a proof of Theorem \ref{th:stricrly_regret} along with the required technical lemmas (Sec. \ref{subsec:proof_ss}).

\subsection{Regret as a Nehari problem (Proof of Theorem \ref{th:reg_as_Nehari})}\label{sec:operator_proof}
\rr{Recall that we focus here on linear controllers, consider}
\begin{align}\label{eq:proof_reduction_main}
   &\inf_{\text{s. causal} \ K} \sup_{ \|w\|_2 \leq 1} \left(w^*T_K^*T_Kw - w^*T_{K_0}^*T_{K_0}w\right)\nn\\
    &= \inf_{\text{s. causal} \ K} \|T_K^*T_K-T_{K_0}^*T_{K_0}\| \nn\\
    &\stackrel{(a)}= \inf_{\text{s. causal} \ K} \|(K - K_0)^* (I+F^*F) (K - K_0)\| \nn\\
    &\stackrel{(b)}= \inf_{\text{s. causal} \ K} \|\Delta K-\Delta K_0\|^2 \nn\\
    % & = \inf_{\text{causal} \ K} \|\Delta K-\left\{\Delta K_0\right\}_+-\left\{\Delta K_0\right\}_-\|^2\nn\\
    &\stackrel{(c)}= \inf_{\text{s. causal} \ L} \|L-\left\{\Delta K_0\right\}_-\|^2,
\end{align}
where $(a)$ follows from \eqref{eq:operator_equiv}, $(b)$ follows from the canonical factorization $I+F^*F = \Delta^*\Delta$
with causal $\Delta$ and $\Delta^{-1}$ is causal and bounded by the positive definiteness of $I+F^*F$. Step $(c)$ follows from $\Delta K_0 = \{\Delta K_0\}_+ + \{\Delta K_0\}_-$, and the fact that, for any strictly causal $L$, one can recover a strictly causal $K$ by setting $K = \Delta^{-1}L + \{\Delta K_0\}_+$.

\subsection{\rr{Optimality of linear controllers (Theorem \ref{th:linear_is_optimal})}}\label{sec:proof_linear}
In this section, we prove Theorem \ref{th:linear_is_optimal}. The proof is based on showing that there exists no nonlinear controller that can achieve a regret better than the optimal linear controller.
\begin{proof}[Proof of Theorem \ref{th:linear_is_optimal}]
Recall the regret problem in \eqref{eq:regret_OP_def} 
\begin{equation}\label{eq:proof_lin_regret}
\min_{\pi\in\Pi^{\text{S.C.}}}\max_{w\in\ell_2}\frac{\text{cost}_{OP}(\pi_c;w)-\text{cost}_{OP}(K_0;w)}{\|w\|_2^2},
\end{equation}
where $K_0$ is the non-causal policy in Theorem \ref{th:noncausal}, $\Pi^{S.C.}$ is the set of (possibly nonlinear) strictly-causal policies $\pi: \ell_2\rightarrow\ell_2$ that can be represented as $v_i = \pi_{i}(w_{i-1},\ldots), \forall i$.
% Recall from \eqref{eq:cost_op} that
% \begin{eqnarray*}
% \text{cost}_{OP}\left(v = \pi_c(w),w\right) & = & \|v\|^2+\|Fv+Gw\|^2 \end{eqnarray*}
Similar to \eqref{eq:operator_equiv}, a completion-of-squares for the numerator of \eqref{eq:proof_lin_regret} gives that \eqref{eq:proof_lin_regret} can be written as
\begin{equation}
\min_{\pi\in\Pi^{S.C.}}\max_{w\in\ell_2}\frac{\|\Delta v - \Delta K_0 w\|^2}{\|w\|_2^2}.
\label{regret_optimal_1}
\end{equation}
The fact that $\Delta$ is causal and causally invertible implies that $v' = \Delta v$ is a causal mapping of $w$. Thus, (\ref{regret_optimal_1}) equals
\begin{equation}
\min_{\pi\in\Pi_c}\max_{w\in\ell_2}\frac{\|v'- \Delta K_0w\|^2}{\|w\|_2^2} = \gamma_{opt}^2.
\label{regret_optimal_2}
\end{equation}
It is not clear how to directly solve \eqref{regret_optimal_2} since $\pi_c$ is non-linear. Let us therefore focus on a suboptimal problem where, for a fixed $\gamma$, we ask whether there exists a policy $\pi$ such that
\begin{equation}
\max_{w\in\ell_2}\frac{\|v' - \Delta K_0w\|^2}{\|w\|^2} \leq \gamma^2.
\label{subopt}
\end{equation}
The smallest value of $\gamma$ for which \eqref{subopt} holds is the regret. Defining the non-causal operator $S = - \Delta K_0$, note further that (\ref{subopt}) is equivalent to
\begin{equation}
\|v'+Sw\|^2\leq\gamma^2\|w\|^2,~~~~\forall w\in\ell_2.
\label{subopt2}
\end{equation}
This optimization can be thought of as a generalized Nehari problem where the causal linear operator in Problem \ref{prob:nehari} is replaced with a non-linear causal policy. %In particular, we aim to find a causal policy $\pi_c$ that minimizes its norm when compared to the sequence $ Sw$ generated by the non-causal, linear operator $S$.

To see the effect of the causality constraint on $v$ it will be useful to introduce a partitioning of the infinite sequences and operators into "past" and "current and future" components. Thus, partition the sequences $v$ and $w$ as
$ v = \left[\begin{array}{c} v_- \\ v_+ \end{array} \right]$ and $w = \left[\begin{array}{c} w_- \\ w_+ \end{array} \right]$ where the semi-infinite sequences $v_- = \{\ldots ,v_{-2},v_{-1}\}$ and $w_- = \{\ldots ,w_{-2},w_{-1}\}$ represent the past and the semi-infinite sequences $v_+ = \{v_0,v_1,\ldots\}$ and $w_+ = \{w_0,w_1,\ldots\}$ represent the current and future. This partitioning induces the following partitioning on $S$
\begin{equation}
S = \left[\begin{array}{cc} S_- & S_A \\ S_H & S_+ \end{array} \right].
\end{equation}
The semi-infinite operators $S_-$ and $S_+$ map the past to past and current and future to current and future, respectively, and are called {\em Toeplitz} operators. The semi-infinite operators $S_H$ and $S_A$ map the past to current and future and current and future to past, respectively, and are called {\em Hankel} operators. The operator $S$ is non-causal and, therefore, the Hankel operator $S_A$ is non-zero.

With this partitioning, (\ref{subopt2}) can be rewritten as
\[ \left\|\left[\begin{array}{c} v'_-  +S_-w_-+S_Aw_+ \\ v'_++S_Hw_-+S_+w_+ \end{array} \right]\right\|^2\leq \gamma^2\left\|\left[\begin{array}{c} w_- \\ w_+ \end{array} \right]\right\|^2,~~~~  \forall w\in\ell_2.\]
Let us now focus on a causal disturbance $w$, i.e., $w_- = 0$. Since the policy $\pi_c$ is causal, this implies that $v_- = 0$ (since the control cannot react to future disturbances). Therefore the above inequality specializes to
\[ \left\|\left[\begin{array}{c} S_Aw_+ \\ v'_++S_+w_+ \end{array} \right]\right\|^2\leq \gamma^2\left\|\left[\begin{array}{c} 0 \\ w_+ \end{array} \right]\right\|^2,~~~~  \forall w_+\in\ell_{2,+}.\]
In other words, $\|v'_++S_+w_+\|^2 \!\leq\! w_+^\ast(\gamma^2I-S_A^\ast S_A)w_+, \forall w_+\in\ell_{2,+}$. 
% \[ \|v'_++S_+w_+\|^2 \leq w_+^\ast(\gamma^2I-S_A^\ast S_A)w_+, \ \ \ \forall w_+\in\ell_{2,+}. \]
For this inequality to hold true, at the very least the RHS must be non-negative for all $w_+\in\ell_{2,+}$. But this means that $\gamma^2I-S_A^\ast S_A\succeq 0$, or equivalently, that $\gamma^2\geq \sigma_{max}^2(S_A)$. In other words, the optimal regret of any nonlinear causal policy cannot be better than the squared maximal singular value of the Hankel operator $S_A$. But this is {\em precisely} the regret achieved by the optimal linear controller using the solution to the Nehari problem in \eqref{eq:proof_reduction_main}. Thus, in terms of minimizing regret, nonlinear policies offer no advantage over linear ones. 
% \bb{Now define the operator $L = \Delta K-\left\{\Delta K_0\right\}_+$. Note that $L$ is causal and bounded iff $K$ is causal and bounded. For one direction, assume that $K$ is causal and bounded. In that case, $L$ is clearly causal and bounded since it is the sum of two causal and bounded operators. For the other direction, assume that $L$ is causal and bounded and solve for $K$ to obtain $K = \Delta^{-1}\left(L+\left\{\Delta K_0\right\}_+\right)$. Since $\Delta^{-1}$ is causal and bounded, $K$ is also the sum of causal and bounded terms. This means that optimizing over $L$ and $K$ are equivalent. But optimizing over $L$ is just the Nehari problem. $K$ is then found from $L$.}
\end{proof}

% \vspace{-0.6em}

\subsection{The Nehari problem: a general solution}\label{subsec:general_nehari}
The following theorem summarizes our solution to the Nehari problem in the general case.
\begin{theorem}[The Nehari problem]\label{th:Nehari_general}
Consider the Nehari problem with $T(z) = H (z^{-1}I-F)^{-1}G$ in a minimal form and stable $F$. Then, the optimal norm is given by
\begin{align}\label{eq:th_general_nehari}
    \min_{\mbox{causal, bounded\ $L(z)$}} \| L(z)- T(z) \|^2&= \lambda_{\text{max}}(Z\Pi),
\end{align}
where $Z$ and $\Pi$ are the unique solutions to the Lyapunov equations
\begin{align}
 Z &= F^\ast Z F + H^\ast H\nn\\
\Pi &= F \Pi F^\ast + G G^\ast.
\end{align}

Moreover, an optimal solution to \eqref{eq:th_general_nehari} is given by
\begin{align}
    L(z)& = H \Pi (I + F_\gamma(zI- F_\gamma)^{-1} )K_\gamma,
\end{align}
with
\begin{align}
    K_\gamma &=  ( I - F^\ast Z_\gamma F \Pi  )^{-1}F^\ast Z_\gamma G\nn\\
    F_\gamma &= F^\ast - K_\gamma G^\ast,
\end{align}
and $Z_\gamma$ is the solution to the Lyapunov equation
\begin{align}
     Z_\gamma &= F^\ast Z_\gamma F + \gamma^{-2}H^\ast H.
\end{align}
with $\gamma^2 = \lambda_{\max}(Z\Pi)$.
\end{theorem}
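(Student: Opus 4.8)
The plan is to prove Theorem~\ref{th:Nehari_general} in two movements: first identify the optimal value as a Hankel norm, then exhibit and certify the explicit optimal approximant.

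\textbf{The optimal value.} Since $F$ is stable, $T(z)=H(z^{-1}I-F)^{-1}G=\sum_{k\ge1}HF^{k-1}Gz^{k}$ is strictly anti-causal. Using the past / current-and-future partition from the proof of Theorem~\ref{th:linear_is_optimal}, its current-and-future-to-past (Hankel) block $\Gamma$ factors as $\Gamma=\mathcal O\mathcal C$, where $\mathcal O=\operatorname{col}(H,HF,HF^{2},\dots)$ is the observability map of $(F,H)$ and $\mathcal C=\operatorname{row}(G,FG,F^{2}G,\dots)$ the controllability map of $(F,G)$; stability of $F$ makes both bounded, and their Gramians $\mathcal O^{\ast}\mathcal O$ and $\mathcal C\mathcal C^{\ast}$ are precisely the unique solutions $Z$ and $\Pi$ of the two Lyapunov equations in the statement, whence $\|\Gamma\|^{2}=\lambda_{\max}(\mathcal O^{\ast}\mathcal O\,\mathcal C\mathcal C^{\ast})=\lambda_{\max}(Z\Pi)$. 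For \emph{any} causal bounded $L(z)$ the current-and-future-to-past block of $L-T$ equals that of $-T$, namely $-\Gamma$ (a causal operator has a zero block there), so $\|L-T\|\ge\|\Gamma\|$; this gives ``$\ge$'' in \eqref{eq:th_general_nehari}, and the matching upper bound will come from the explicit construction, so the proof need not invoke Nehari's theorem~\cite{nehari1957bounded} (though it could).

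\textbf{Construction and certification.} Fix $\gamma^{2}=\lambda_{\max}(Z\Pi)$. First the algebra that makes the formulas well posed: comparing $Z_\gamma=F^{\ast}Z_\gamma F+\gamma^{-2}H^{\ast}H$ with the equation for $Z$ and using uniqueness of the Lyapunov solution gives $Z_\gamma=\gamma^{-2}Z$, so $F^{\ast}Z_\gamma F\Pi=\gamma^{-2}F^{\ast}ZF\Pi$ has spectral radius at most $\gamma^{-2}\lambda_{\max}(Z\Pi)=1$ (monotonicity of $\lambda_{\max}$ under $0\preceq F^{\ast}ZF\preceq Z$); under minimality the inequality is strict, so $(I-F^{\ast}Z_\gamma F\Pi)^{-1}$ exists, $F_\gamma=F^{\ast}-K_\gamma G^{\ast}$ is stable, and $L(z)=H\Pi\big(I+F_\gamma(zI-F_\gamma)^{-1}\big)K_\gamma$ is causal and bounded. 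It remains to show $\|L-T\|\le\gamma$. Write $E(z)=L(z)-T(z)$ and give $E$ a $2n$-state realization whose state concatenates the causal state of $L$ (dynamics $F_\gamma$) with the anti-causal state of $T$ (dynamics $F$, run backward). On this mixed realization one exhibits a block quadratic storage function built from $\Pi$, $Z$ and $Z_\gamma$ and invokes a causal/anti-causal bounded-real certificate: after substituting the three Lyapunov equations and the definitions of $K_\gamma$ and $F_\gamma$, it collapses to an algebraic identity showing $\gamma^{2}I-E^{\ast}E\succeq0$ on the unit circle. Combined with the lower bound, $\|L-T\|=\gamma$, so $L$ is optimal and the optimal value is $\lambda_{\max}(Z\Pi)$.

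\textbf{Main obstacle.} The whole weight of the proof is in the inequality $\|L-T\|\le\gamma$. Because $E=L-T$ is a sum of a causal and a strictly anti-causal system, the ordinary (causal) Kalman--Yakubovich--Popov / bounded-real lemma does not apply and one must set up the correct part-forward / part-backward state-space and the matching indefinite storage function; making the bookkeeping close against the three Lyapunov equations is the technical heart, and closely follows the Krein-space recursions of \cite{hassibi1999indefinite} specialized to steady state. A secondary, lower-order point is well-posedness exactly at $\gamma=\gamma_{\mathrm{opt}}$, where $I-\gamma^{-2}Z\Pi$ is singular along the Hankel maximal vector: the combination $I-F^{\ast}Z_\gamma F\Pi$ entering $K_\gamma$ nonetheless remains invertible under minimality, and, if one prefers, $L$ can be recovered as the limit of the strictly sub-optimal extensions parametrized for $\gamma>\gamma_{\mathrm{opt}}$ as $\gamma\downarrow\gamma_{\mathrm{opt}}$. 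Everything else --- the Gramian interpretation of $Z$ and $\Pi$, boundedness of the maps, and the Hankel-norm identity --- is routine.
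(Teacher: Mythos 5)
Your lower-bound argument is sound and essentially coincides with what the paper imports from Nehari/Hankel theory: the Hankel block of $L-T$ is unaffected by any causal bounded $L$, it factors through the observability and controllability maps of $(F,H,G)$, and its squared norm is $\lambda_{\max}(Z\Pi)$; the observation $Z_\gamma=\gamma^{-2}Z$ is also correct. The genuine gap is in the upper bound, which you yourself identify as "the whole weight of the proof" and then do not carry out: you assert that a $2n$-dimensional mixed causal/anti-causal realization of $E=L-T$ admits "a block quadratic storage function built from $\Pi$, $Z$ and $Z_\gamma$" which, "after substituting the three Lyapunov equations, collapses to an algebraic identity" giving $\gamma^2 I-E^\ast E\succeq 0$ on the unit circle. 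No storage function is exhibited, no dissipation inequality is written, and no identity is verified, so the claim that this particular $L(z)$ --- with these particular $K_\gamma$ and $F_\gamma$ --- attains $\gamma$ is unsupported. This is exactly the content the paper obtains by a different route: it invokes the parameterization of all (sub)optimal Nehari extensions in \cite{hassibi1999indefinite} (Lemmas $12.8.1$--$12.8.2$ and Theorem $12.8.2$), takes the central solution $-L_{21}(z)L_{11}^{-1}(z)$ of the associated $J$-factorization, and reduces the proof to algebraic simplification of that formula (using $P=(I-Z_\gamma\Pi)^{-1}Z_\gamma$ and the push-through identity) until it matches the stated $K_\gamma$, $F_\gamma$, $L(z)$. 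Your plan to re-derive achievability from scratch is legitimate in principle, but as written it is a pointer to the missing computation, not a proof.

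Two further points need attention if you pursue your route. First, the claimed stability of $F_\gamma=F^\ast-K_\gamma G^\ast$ is also asserted without argument, and it is needed for $L(z)$ to be causal and bounded. Second, your well-posedness claim at $\gamma=\gamma_{\mathrm{opt}}$ --- that minimality forces $\rho(F^\ast Z_\gamma F\Pi)<1$ so that $(I-F^\ast Z_\gamma F\Pi)^{-1}$ exists --- is nontrivial: equality would require a maximal Schmidt-type vector annihilated by $H\Pi^{1/2}$ (respectively $G^\ast$ in the dual form), and ruling this out takes an actual argument, not just the word "minimality". Your fallback of taking the limit of strictly suboptimal extensions as $\gamma\downarrow\gamma_{\mathrm{opt}}$ is a reasonable repair, but it too must be justified (uniform boundedness of the suboptimal family and convergence of the formulas), and note that the paper's own central-solution derivation quietly faces the same degeneracy through $I-Z_\gamma\Pi$ being singular at the optimum, which is why its final formulas are arranged around $I-Z_\gamma F\Pi F^\ast$.
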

Explicit solutions to the \emph{state-space Nehari problem} are known, e.g., \cite{Ball1990}\cite{GloverDoyle_book} but, to the best of our knowledge, the explicit solution in Theorem \ref{th:Nehari_general} is new and may be of independent interest. The derivation is based on the centralized solution obtained from a parameterization of all solutions to the Nehari problem \cite{hassibi1999indefinite}. The proof appears in Appendix \ref{app:Nehari_general}.%\bb{\cite[App. ?]{extended}}.

There is a slight difference between the Nehari problem in Theorem \ref{th:Nehari_general} and the one we aim to solve in Theorem \ref{th:reg_as_Nehari}. In Theorem \ref{th:Nehari_general}, we approximate a \emph{strictly} anticausal operator $T(z)$, but Theorem \ref{th:reg_as_Nehari} includes an anticausal operator. The required adaptation follows naturally in the $z$-domain,
\begin{align}
    \regret &= \min_{\text{s.causal} \ L(z)}  \| L(z) - T(z)\|^2\nn\\
    &=  \min_{\text{s.causal} \ L(z)} \| z L(z) - z T(z)\|^2 \nn\\
    &\stackrel{(a)}=  \min_{\text{causal} \ L'(z)} \| L'(z) - z T(z)\|^2,
\end{align}
where $(a)$ is due to the invertible substitution $L'(z) = zL(z)$, and note that $z T(z)$ is anticausal. Thus, we can solve a Nehari problem with $z T(z)$ to obtain $L'(z)$, and recover $L(z)$ with $L(z) = z^{-1}L'(z)$. Next, we present the solution to our Nehari problem. We apply this idea to the anticausal part of $\Delta(z) K_0(z)$ denoted by $T(z)$ (given explicitly below).
\begin{lemma}\label{lemma:our_nehari}
The optimal solution to the Nehari problem with the anticausal transfer function $T(z)$ (given in \eqref{eq:TS})
% $$\{\Delta(z) K_0(z)\}_- = (R + B_u^\ast PB_u)^{-\ast/2}B_u^\ast (z^{-1} I - A_K^\ast )^{-1} A_K^\ast P B_w$$
is
\begin{align}\label{eq:sol_Nehari}
L(z) &= -(R + B_u^\ast PB_u)^{-\ast/2}B_u^\ast \Pi (zI- F_\gamma)^{-1}K_\gamma
\end{align}
where
\begin{align}\label{eq:th_Nehari_gain}
    K_\gamma &=  (I - A_K Z_\gamma A_K^\ast \Pi )^{-1}A_K Z_\gamma A_K^\ast P B_w\nn\\
    F_\gamma &= A_K - K_\gamma B_w^\ast P A_K,
        % & \rr{= A_K(I - Z_\gamma A_K^\ast \Pi A_K)^{-1}( I -   Z_\gamma \Pi )},
\end{align}
$\Pi$ is given in \eqref{eq:SC_regret_Lyapunov} and $Z_\gamma$ is given in \eqref{eq:Z_gamma}.
\end{lemma}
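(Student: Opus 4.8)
\textbf{Proof proposal for Lemma \ref{lemma:our_nehari}.}

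The plan is to specialize the general state-space Nehari solution of Theorem \ref{th:Nehari_general} to the particular anticausal operator $T(z)$ that arises from the regret reduction, namely the anticausal part of $\Delta(z)K_0(z)$. The first step is to write down an explicit minimal state-space realization $T(z) = H(z^{-1}I - F)^{-1}G$ of this operator. From Theorem \ref{th:reg_as_Nehari} and the Markov-parameter description of $F$ and $G$ in the state-space setting, I would compute the canonical factorization $\Delta^\ast\Delta = I + F^\ast F$ explicitly; the standard LQR completion-of-squares identity shows that $\Delta$ has Markov parameters built from $A_K = A - B_u K_{lqr}$ and the factor $(R + B_u^\ast P B_u)^{1/2}$, and that $\Delta^{-1}$ is causal and stable because $A_K$ is stable. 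Composing with $K_0 = -(I+F^\ast F)^{-1}F^\ast G$ and extracting the anticausal part should yield $F = A_K^\ast$, $G = P B_w$ (up to the appropriate similarity), and $H = -(R + B_u^\ast P B_u)^{-1/2} B_u^\ast$, which is exactly the data appearing in \eqref{eq:th_Nehari_gain} and \eqref{eq:Z_gamma}. This identification is the crux: once it is in place, the two Lyapunov equations $Z = F^\ast Z F + H^\ast H$ and $\Pi = F\Pi F^\ast + G G^\ast$ of Theorem \ref{th:Nehari_general} become precisely \eqref{eq:SC_regret_Lyapunov} and \eqref{eq:Z_gamma}.

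The second step is bookkeeping: plug $F = A_K^\ast$, $G = PB_w$, $H = -(R+B_u^\ast PB_u)^{-1/2}B_u^\ast$ into the formulas of Theorem \ref{th:Nehari_general}. Then $K_\gamma = (I - F^\ast Z_\gamma F \Pi)^{-1} F^\ast Z_\gamma G$ becomes $(I - A_K Z_\gamma A_K^\ast \Pi)^{-1} A_K Z_\gamma A_K^\ast P B_w$, $F_\gamma = F^\ast - K_\gamma G^\ast$ becomes $A_K - K_\gamma B_w^\ast P A_K$ — wait, note $F^\ast = A_K$ but $G^\ast = B_w^\ast P$; the extra $A_K$ factor in \eqref{eq:th_Nehari_gain} relative to the bare $B_w^\ast P$ comes from the fact that the realization of the \emph{strictly} anticausal part carries one extra power of $A_K^\ast$, which is the subtlety flagged in the paragraph before the lemma about passing from strictly-anticausal to anticausal via the substitution $L'(z) = zL(z)$. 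I would handle this by first applying Theorem \ref{th:Nehari_general} to $zT(z)$ (which is anticausal but not strictly so, hence has a realization with the shifted data), solving, and then setting $L(z) = z^{-1}L'(z)$; tracking the shift through the realization is what produces the $A_K^\ast P B_w$ and $B_w^\ast P A_K$ terms rather than $PB_w$ and $B_w^\ast P$. Finally the output formula $L(z) = H\Pi(I + F_\gamma(zI - F_\gamma)^{-1})K_\gamma$ collapses, after the $z^{-1}$ correction cancels the identity term, to $L(z) = -(R+B_u^\ast PB_u)^{-\ast/2} B_u^\ast \Pi (zI - F_\gamma)^{-1} K_\gamma$, which is \eqref{eq:sol_Nehari}.

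The third step is to verify the side conditions needed to invoke Theorem \ref{th:Nehari_general}: minimality of the realization $(F,G,H)$ and stability of $F = A_K^\ast$. Stability is immediate since $K_{lqr}$ is the stabilizing LQR gain, so $\rho(A_K) < 1$. Minimality (controllability of $(F,G)$ and observability of $(F,H)$) should follow from stabilizability of $(A,B_u)$ together with $Q, R \succ 0$ and, if needed, a standard detectability hypothesis on $(A, B_w)$ or $(A, Q^{1/2})$; if a particular pair fails to be minimal one passes to the minimal part, which does not change the Nehari norm or the optimal $L$. I would state this reduction in one line rather than belabor it.

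The main obstacle I anticipate is the first step — getting the explicit minimal realization of $\{\Delta K_0\}_-$ correct, including the precise similarity transformation. The completion-of-squares manipulations that turn $I + F^\ast F$ into $\Delta^\ast \Delta$ with $\Delta$ expressed via $A_K$ and $P$ are classical (this is the operator-theoretic form of the LQR Riccati derivation), but composing $\Delta$ with $K_0 = -(I+F^\ast F)^{-1}F^\ast G = -\Delta^{-1}\Delta^{-\ast}F^\ast G$ and then isolating the anticausal (strictly upper-triangular) block requires care: the anticausal part of $\Delta^{-\ast} F^\ast G$ has a state-space realization whose $F$-matrix is $A_K^\ast$ and whose input/output matrices must be read off correctly, and it is exactly here that the extra power of $A_K^\ast$ (equivalently, the strictly-anticausal vs. anticausal distinction handled by the $z$-shift) enters. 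Once that realization is pinned down, everything else is substitution into Theorem \ref{th:Nehari_general}.
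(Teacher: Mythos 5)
Your overall route is the same as the paper's: obtain $T(z)$ in \eqref{eq:TS} from the factorization and decomposition of $\Delta(z)K_0(z)$, pass to $zT(z)$, apply Theorem \ref{th:Nehari_general}, and undo the shift via $L(z)=z^{-1}L'(z)$; your identification of the realization data and the observation that the identity term collapses (since $I+F_\gamma(zI-F_\gamma)^{-1}=z(zI-F_\gamma)^{-1}$) are exactly what the paper does. The genuine gap is in your handling of the extra $A_K^\ast$ and $A_K$ factors in \eqref{eq:th_Nehari_gain}. Note that $zT(z)=-(R+B_u^\ast PB_u)^{-\ast/2}B_u^\ast(z^{-1}I-A_K^\ast)^{-1}PB_w$, which is \emph{strictly} anticausal (not ``anticausal but not strictly so'') and whose input matrix is $PB_w$ with no extra power of $A_K^\ast$: multiplying by $z$ is precisely what removes that power. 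Substituting $(F,G,H)=(A_K^\ast,\,PB_w,\,-(R+B_u^\ast PB_u)^{-\ast/2}B_u^\ast)$ into Theorem \ref{th:Nehari_general} gives $K_\gamma=(I-A_KZ_\gamma A_K^\ast\Pi)^{-1}A_KZ_\gamma PB_w$ and $F_\gamma=A_K-K_\gamma B_w^\ast P$, i.e.\ the constants of \eqref{eq:th_strictly_Nehari_SS}, and this is what the paper's proof produces; the shift cannot ``produce'' the $A_K^\ast PB_w$ and $B_w^\ast PA_K$ terms you are trying to recover, so your central claim in step two does not go through.

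Your implicit fallback --- realizing the strictly anticausal part of $T(z)$ itself, namely $-(R+B_u^\ast PB_u)^{-\ast/2}B_u^\ast(z^{-1}I-A_K^\ast)^{-1}A_K^\ast PB_w$, which indeed carries the input matrix $A_K^\ast PB_w$ --- does not rescue the displayed gains either. First, the controllability Gramian of that realization solves $\Pi'=A_K^\ast\Pi'A_K+A_K^\ast PB_wB_w^\ast PA_K$, whose solution is $\Pi'=A_K^\ast\Pi A_K$, not the $\Pi$ of \eqref{eq:SC_regret_Lyapunov} referenced in the lemma. Second, and more fundamentally, approximating only the strictly anticausal part discards the constant ($z^0$) coefficient $-(R+B_u^\ast PB_u)^{-\ast/2}B_u^\ast PB_w$ of $T$, which a strictly causal $L$ cannot absorb; the correct reduction of $\min_{\text{s.causal }L}\|L-T\|$ is the Nehari problem for $zT$, not for the strictly anticausal part of $T$, and the two parameterizations are genuinely different since $A_KZ_\gamma PB_w\neq A_KZ_\gamma A_K^\ast PB_w$ in general. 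A correct execution of your own plan therefore yields \eqref{eq:sol_Nehari} with the gains of \eqref{eq:th_strictly_Nehari_SS}, consistent with Theorem \ref{th:stricrly_regret} and its appendix proof; the factors appearing in \eqref{eq:th_Nehari_gain} should be flagged as inconsistent with that derivation rather than justified by the shift argument.
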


\subsection{The regret-optimal control problem (Theorem \ref{th:stricrly_regret})}\label{subsec:proof_ss}
By Theorem \ref{th:reg_as_Nehari}, we need explicit expressions for the factorization $\Delta^\ast(z^{-\ast})\Delta(z) = I+F^\ast(z^{-\ast}) F(z)$ and the decomposition of $\Delta(z)K_0(z)$ into its strictly-causal and anticausal transfer functions. These results are presented next as lemmas, and their proofs appear in Appendix \ref{app:technical_lemmas}.%\bb{\cite[App. \ref{app:technical_lemmas}]{extended}.} 
\begin{lemma}\label{lemma:fg}
For the state-space setting, the transfer functions of the operators $F$ and $G$ in \eqref{eq:operator_sys} are given by
\begin{equation*}
    F(z) \!=\! Q^{1/2}(z I \!-\! A)^{-1}B_uR^{-1/2}, ~~
    G(z) \!=\! Q^{1/2}(z I \!-\! A)^{-1}B_w
\end{equation*}
with $R = R^{\ast/2}R^{1/2}$ and $Q = Q^{\ast/2}Q^{1/2}$.
\end{lemma}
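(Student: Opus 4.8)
The plan is to read off the transfer functions directly from the impulse-response (Markov parameter) description of $F$ and $G$ that was already fixed in Section \ref{subsec:op}. Recall that there the operators $F$ and $G$ were defined as doubly-infinite lower-triangular block Toeplitz operators whose Markov parameters are $F_i = Q^{1/2}A^{i-1}B_u R^{-1/2}$ and $G_i = Q^{1/2}A^{i-1}B_w$ for $i>0$ (and zero for $i\le 0$). A block Toeplitz operator with Markov parameters $\{M_i\}_{i\ge 1}$ is, by definition, the one whose $z$-transform (transfer function) is $\sum_{i\ge 1} M_i z^{-i}$. So the entire content of the lemma is the evaluation of the two geometric-type series $\sum_{i\ge 1} Q^{1/2}A^{i-1}B_u R^{-1/2} z^{-i}$ and $\sum_{i\ge 1} Q^{1/2}A^{i-1}B_w z^{-i}$ in closed form.

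The key step is the identity $\sum_{i\ge 1} A^{i-1} z^{-i} = z^{-1}\sum_{j\ge 0}(A/z)^{j} = z^{-1}(I - z^{-1}A)^{-1} = (zI-A)^{-1}$, which is valid (as a convergent Neumann series) for $|z|$ larger than the spectral radius of $A$, and then extends to the whole domain of analyticity by analytic continuation. Factoring out $Q^{1/2}$ on the left and $B_u R^{-1/2}$ (resp. $B_w$) on the right of the series then gives exactly
\[
F(z) = Q^{1/2}(zI-A)^{-1}B_u R^{-1/2}, \qquad G(z) = Q^{1/2}(zI-A)^{-1}B_w,
\]
which is the claimed formula. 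The factorizations $Q = Q^{\ast/2}Q^{1/2}$ and $R = R^{\ast/2}R^{1/2}$ are just the (Cholesky-type) square-root conventions under which the weighted state and control $s = \{Q^{1/2}x_t\}$, $v = \{R^{1/2}u_t\}$ were introduced, so nothing further needs to be verified there.

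I do not expect any real obstacle here: this is a bookkeeping lemma that merely records, in the $z$-domain, the state-space-to-operator correspondence set up earlier in the paper. The only point requiring a word of care is the region of convergence of the Neumann series — since $A$ need not be stable, the series converges only for $|z|$ exceeding the spectral radius of $A$, and the stated rational expression should be understood as the analytic continuation of that series (equivalently, $F$ and $G$ are treated as formal Toeplitz symbols / rational matrix functions, exactly as is standard in this literature). Beyond that the proof is a one-line resummation.
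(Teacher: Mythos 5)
Your proposal is correct and takes essentially the same route as the paper: the paper simply writes the weighted state-space realization $x_{t+1}=Ax_t+B_uR^{-1/2}v_t$, $s_t=Q^{1/2}x_t$ (and similarly with $B_w$ for $G$) and takes the $z$-transform of the recursion, which is the same one-line computation as your resummation of the Markov parameters into $Q^{1/2}(zI-A)^{-1}B_uR^{-1/2}$ and $Q^{1/2}(zI-A)^{-1}B_w$. Your remark about the region of convergence of the Neumann series when $A$ is unstable, with the rational expression understood as the Toeplitz symbol by analytic continuation, is a legitimate point of care that the paper's $z$-transform argument leaves implicit.
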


The canonical spectral factorization is presented next.
\begin{lemma}[Spectral factorization]\label{lemma:Delta}
The transfer function $I+ F^\ast(z^{-\ast})F(z)$ can be factored as $\Delta^*(z^{-*})\Delta(z)$, where
\begin{align}\label{eq:delta_def}
\!\!\!  \Delta(z) \!=\! (R + B_u^{*}PB_u)^{1/2} (I \!+\! K_{\emph{lqr}} (zI \!-\! A)^{-1}B_u)R^{-1/2}\!\!\!\!
\end{align}
$P$ is the unique stabilizing solution to the Ricatti equation
\begin{align}%\label{eq:Ricc_LQR}
Q - P + A^{*}PA - A^{*}PB_u(R + B_u^{*}PB_u)^{-1}B_u^{*}PA = 0,\nn
\end{align}
and $K_{\emph{lqr}} = (R + B_u^\ast P B_u)^{-1}B_u^\ast P A$. Furthermore, $\Delta^{-1}(z)$ is casual and bounded on the unit circle.
\end{lemma}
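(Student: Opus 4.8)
The plan is to verify directly that the claimed $\Delta(z)$ satisfies $\Delta^\ast(z^{-\ast})\Delta(z) = I + F^\ast(z^{-\ast})F(z)$, and then argue separately that this factorization is canonical (i.e.\ $\Delta^{-1}(z)$ causal and bounded on the unit circle). First I would substitute the expression from Lemma~\ref{lemma:fg}, $F(z) = Q^{1/2}(zI-A)^{-1}B_u R^{-1/2}$, so that
\begin{align}
I + F^\ast(z^{-\ast})F(z) = R^{-\ast/2}\Bigl(R + B_u^\ast(z^{-\ast}I - A^\ast)^{-1}Q(zI-A)^{-1}B_u\Bigr)R^{-1/2}. \nn
\end{align}
Absorbing the outer $R^{-\ast/2},R^{-1/2}$, the task reduces to showing
\begin{align}
R + B_u^\ast(z^{-\ast}I-A^\ast)^{-1}Q(zI-A)^{-1}B_u = \bigl(I + B_u^\ast(z^{-\ast}I-A^\ast)^{-1}K_{\text{lqr}}^\ast\bigr)(R+B_u^\ast P B_u)\bigl(I+K_{\text{lqr}}(zI-A)^{-1}B_u\bigr). \nn
\end{align}

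The main step is a standard ``completion of squares in the $z$-domain'' computation driven by the Riccati equation. I would use the algebraic identity $Q = P - A^\ast P A + A^\ast P B_u(R+B_u^\ast PB_u)^{-1}B_u^\ast P A = P - A^\ast P A + K_{\text{lqr}}^\ast(R+B_u^\ast P B_u)K_{\text{lqr}}$ (which is exactly the Riccati equation of Lemma~\ref{lemma:Delta}), together with the resolvent bookkeeping identity $(z^{-\ast}I-A^\ast)^{-1}(A^\ast P + P A - A^\ast P A - \text{\,(the } zI-A, z^{-\ast}I-A^\ast\text{\,cross terms)})(zI-A)^{-1}$ collapsing the ``$P - A^\ast P A$'' piece to a constant. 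Concretely: write $(zI-A)^{-1} = $ and note $A(zI-A)^{-1} = z(zI-A)^{-1} - I$; substituting $Q$ and carefully expanding, the $P$-terms telescope to a constant $B_u^\ast P B_u$ contribution, and the $K_{\text{lqr}}$-terms reassemble into the cross terms $B_u^\ast(z^{-\ast}I-A^\ast)^{-1}K_{\text{lqr}}^\ast(R+B_u^\ast PB_u) + (R+B_u^\ast PB_u)K_{\text{lqr}}(zI-A)^{-1}B_u$ plus the quadratic $B_u^\ast(z^{-\ast}I-A^\ast)^{-1}K_{\text{lqr}}^\ast(R+B_u^\ast PB_u)K_{\text{lqr}}(zI-A)^{-1}B_u$. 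Matching term by term with the right-hand side above then finishes the identity. I expect this bookkeeping to be the bulk of the work, though it is entirely routine once the resolvent substitution $A(zI-A)^{-1}=z(zI-A)^{-1}-I$ is applied systematically.

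For the canonical property, I would observe that $\Delta(z) = (R+B_u^\ast PB_u)^{1/2}\bigl(I + K_{\text{lqr}}(zI-A)^{-1}B_u\bigr)R^{-1/2}$ is causal (a proper rational transfer function analytic outside the spectral radius of $A$ — well, analytic where $(zI-A)^{-1}$ is, and we only need it on and outside the unit circle after the stabilizing choice of $P$). Its inverse is computed by the matrix-inversion lemma / the standard state-space inversion formula: $\bigl(I+K_{\text{lqr}}(zI-A)^{-1}B_u\bigr)^{-1} = I - K_{\text{lqr}}(zI - A + B_u K_{\text{lqr}})^{-1}B_u = I - K_{\text{lqr}}(zI-A_K)^{-1}B_u$, where $A_K = A - B_u K_{\text{lqr}}$. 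Since $P$ is the \emph{stabilizing} solution of the Riccati equation, $A_K$ is stable (spectral radius $<1$), so $(zI-A_K)^{-1}$ is analytic on and outside the unit circle; hence $\Delta^{-1}(z) = R^{1/2}\bigl(I - K_{\text{lqr}}(zI-A_K)^{-1}B_u\bigr)(R+B_u^\ast PB_u)^{-1/2}$ is causal and bounded on the unit circle.

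The only genuine obstacle I anticipate is keeping the conjugate-transpose/$z^{-\ast}$ bookkeeping consistent in the completion-of-squares step — in particular making sure the anticausal factor is indeed $\Delta^\ast(z^{-\ast})$ and not some unsymmetrized variant, and that the cross terms land with the correct $(R+B_u^\ast PB_u)$ weighting rather than $R$ or $P$. I would handle this by doing the scalar-analogue check first (all quantities scalars), confirming the telescoping, and then transcribing to the matrix case where the only subtlety is non-commutativity, which the chosen factored form already respects.
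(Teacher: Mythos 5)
Your proposal is correct and is essentially the paper's proof: the paper performs the same Riccati-driven completion of squares, organized as a block LDU factorization of the $P$-shifted center matrix in the quadratic form (its ``holds for any Hermitian $P$'' identity is exactly your resolvent telescoping $P - A^\ast P A = (z^{-1}I-A^\ast)P(zI-A)+(z^{-1}I-A^\ast)PA+A^\ast P(zI-A)$ sandwiched between $B_u^\ast(z^{-1}I-A^\ast)^{-1}$ and $(zI-A)^{-1}B_u$), and it establishes causality and boundedness of $\Delta^{-1}(z)$ exactly as you do, via the matrix inversion lemma and stability of $A_K=A-B_uK_{\text{lqr}}$ from the stabilizing Riccati solution. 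The only nit is notational: in your displayed reduction the conjugated resolvent should read $(z^{-1}I-A^\ast)^{-1}$ rather than $(z^{-\ast}I-A^\ast)^{-1}$ --- precisely the bookkeeping issue you flagged yourself.
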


The following lemma provides the decomposition of the transfer function $\Delta(z)K_0(z)$.
\begin{lemma}[Decomposition]\label{lemma:decomposition}
The transfer function $\Delta(z)K_0(z)= - \Delta^{-*}(z^{-*})F^*(z^{-*})G(z)$ can be written as a sum of anticausal and strictly causal transfer functions
\begin{align}\label{eq:TS}
    T(z)&= -(R + B_u^\ast PB_u)^{-\ast/2}B_u^\ast (I+(z^{-1} I - A_K^\ast )^{-1} A_K^\ast) P B_w\nn\\
    S(z)&= -(R + B_u^\ast PB_u)^{-\ast/2}B_u^\ast P A (zI - A)^{-1}B_w,
\end{align}
\end{lemma}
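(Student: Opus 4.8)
The plan is to start from the closed-form expression $\Delta(z)K_0(z) = -\Delta^{-\ast}(z^{-\ast})F^\ast(z^{-\ast})G(z)$, substitute the explicit realizations of $F$, $G$ (Lemma \ref{lemma:fg}) and $\Delta$ (Lemma \ref{lemma:Delta}), and then perform a partial-fraction–type separation into the part that is analytic outside the unit disk (strictly causal) and the part that is analytic inside (anticausal). First I would write $F^\ast(z^{-\ast}) = R^{-\ast/2}B_u^\ast(z^{-1}I - A^\ast)^{-1}Q^{\ast/2}$ and $G(z) = Q^{1/2}(zI-A)^{-1}B_w$, so that the product $F^\ast(z^{-\ast})G(z)$ contains the mixed term $(z^{-1}I-A^\ast)^{-1}Q^{\ast/2}Q^{1/2}(zI-A)^{-1} = (z^{-1}I-A^\ast)^{-1}Q(zI-A)^{-1}$. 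The key algebraic device is the Riccati equation \eqref{eq:Riccati}, which lets one replace $Q$ by $P - A^\ast P A + A^\ast P B_u(R+B_u^\ast P B_u)^{-1}B_u^\ast P A$; the idea is that this substitution produces a "telescoping" identity of the form
\begin{align}
(z^{-1}I-A^\ast)^{-1}\,Q\,(zI-A)^{-1} &= P(zI-A)^{-1} + (z^{-1}I-A^\ast)^{-1}A^\ast P - (z^{-1}I-A^\ast)^{-1}\big(A^\ast P A - Q\big)(zI-A)^{-1},\nn
\end{align}
and the leftover middle term, after using the Riccati equation once more and introducing $A_K = A - B_uK_{\mathrm{lqr}}$, collapses into something absorbed by $\Delta^{-\ast}(z^{-\ast})$. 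This is the standard "completion of squares in the $z$-domain" manipulation and is the step I expect to be the main obstacle, since it requires carefully tracking how $\Delta^{-\ast}(z^{-\ast})$ acts on each piece and verifying that the cross terms cancel to leave exactly the $A_K$-based realizations claimed in \eqref{eq:TS}.

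Second, having isolated the two pieces, I would identify analytic structure: a term of the form $(zI - A)^{-1}$ with $A$ possibly unstable contributes the strictly causal part $S(z)$, while a term of the form $(z^{-1}I - A_K^\ast)^{-1}$ with $A_K$ stable (guaranteed since $K_{\mathrm{lqr}}$ is the stabilizing LQR gain) is the anticausal part $T(z)$; the constant (static) term must be attributed to the anticausal part by the convention $\{\cdot\}_-$ includes the diagonal, which matches the ``$I + (z^{-1}I - A_K^\ast)^{-1}A_K^\ast$'' grouping in the stated $T(z)$. Concretely I expect the premultiplication by $\Delta^{-\ast}(z^{-\ast}) = R^{\ast/2}(I + B_u^\ast(z^{-1}I - A_K^\ast)^{-1}K_{\mathrm{lqr}}^\ast)^{-\ast}(R+B_u^\ast PB_u)^{-\ast/2}$ — wait, more precisely using the inverse of $\Delta$ from Lemma \ref{lemma:Delta} — to convert the $(zI-A)^{-1}$ blocks acted on by the anticausal factor back into $(zI-A)^{-1}$ blocks with the coefficient matrix $(R+B_u^\ast PB_u)^{-\ast/2}B_u^\ast PA$, yielding $S(z)$, while the genuinely anticausal block picks up the closed-loop matrix $A_K^\ast$ and the Riccati-simplified numerator $P B_w$, yielding $T(z)$.

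Finally I would verify the decomposition is consistent: $T(z) + S(z)$ should equal the original $\Delta(z)K_0(z)$, $T(z)$ should be bounded and anticausal (all poles inside the unit circle, since $A_K$ is stable), and $S(z)$ strictly causal (the expansion of $(zI-A)^{-1}$ starts at $z^{-1}$ after the static prefactor is checked to vanish — indeed $S(\infty) = 0$ is immediate from the $(zI-A)^{-1}$ form). A cleaner route for the bookkeeping, which I would actually use in the writeup, is to avoid the explicit partial fraction and instead \emph{guess} the answer \eqref{eq:TS} and \emph{verify} it: compute $T(z) + S(z)$ directly, combine the two $(zI-A_K)$- and $(zI-A)$-resolvents over a common structure using the resolvent identity, and check the result equals $-\Delta^{-\ast}(z^{-\ast})F^\ast(z^{-\ast})G(z)$ by plugging in Lemmas \ref{lemma:fg}–\ref{lemma:Delta} and invoking \eqref{eq:Riccati}; uniqueness of the causal/anticausal splitting then finishes the proof. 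The main obstacle remains the same: showing the Riccati equation forces the cross terms to cancel, so that what is left is precisely a sum of a $(z^{-1}I-A_K^\ast)^{-1}$-term plus a $(zI-A)^{-1}$-term with the stated coefficients.
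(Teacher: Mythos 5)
Your overall plan follows the same outline as the paper's proof (substitute the realizations of Lemmas \ref{lemma:fg} and \ref{lemma:Delta}, use the Riccati equation \eqref{eq:Riccati} to split the cross term, and assign the constant term to the anticausal part, consistent with the convention that $\{\cdot\}_-$ contains the diagonal), but the one identity you actually display is false, and the step you yourself flag as ``the main obstacle'' is precisely the content of the proof. Multiplying your telescoping identity on the left by $(z^{-1}I-A^\ast)$ and on the right by $(zI-A)$ turns its right-hand side into $z^{-1}P - A^\ast P + zA^\ast P - 2A^\ast P A + Q$, not $Q$, so it does not hold. More fundamentally, a cross-resolvent splitting of the form $(z^{-1}I-X^\ast)^{-1}Q(zI-A)^{-1} = (z^{-1}I-X^\ast)^{-1}X^\ast W + WA(zI-A)^{-1} + W$ requires $W$ to solve the Stein-type equation $W = Q + X^\ast W A$, and it is only for the closed-loop choice $X = A_K$ that this equation is solved by $W = P$ (it is then exactly the Riccati equation rewritten as $P = Q + A_K^\ast P A$). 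With the open-loop $X = A$, as in your display, the equation is not solved by $P$ and need not even admit a bounded solution when $A$ is unstable, which the setting explicitly allows; the resulting ``anticausal'' piece $(z^{-1}I-A^\ast)^{-1}A^\ast W$ would then not be a legitimate bounded anticausal factor.

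The missing idea is the order of operations: before any splitting, absorb $\Delta^{-\ast}(z^{-\ast})$ into $F^\ast(z^{-\ast})$ via the push-through identity
\begin{align}
\bigl(I + B_u^\ast(z^{-1}I-A^\ast)^{-1}K_{lqr}^\ast\bigr)^{-1}B_u^\ast(z^{-1}I-A^\ast)^{-1} = B_u^\ast\bigl(z^{-1}I-A_K^\ast\bigr)^{-1},\nn
\end{align}
which gives $-\Delta^{-\ast}(z^{-\ast})F^\ast(z^{-\ast})G(z) = -(R+B_u^\ast P B_u)^{-\ast/2}B_u^\ast(z^{-1}I-A_K^\ast)^{-1}Q(zI-A)^{-1}B_w$; only then apply the cross-resolvent decomposition with $W=P$, and the three resulting terms are exactly $T(z)$ (the constant plus the $(z^{-1}I-A_K^\ast)^{-1}A_K^\ast P$ term) and $S(z)$ in \eqref{eq:TS}. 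Your fallback ``guess-and-verify'' route (show $T(z)+S(z)$ equals the product and invoke uniqueness of the causal/anticausal split) is sound in principle, but the verification is precisely this computation, which your proposal neither carries out nor supports with a correct identity; as written, the argument therefore has a genuine gap at its central step.
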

Recall that Lemma \ref{lemma:our_nehari} (Sec. \ref{sec:main}) provides the solution to the Nehari problem for our problem.
\begin{proof}[Proof of Lemma \ref{lemma:our_nehari}]
We apply Theorem \ref{th:Nehari_general} with 
\begin{align}\label{eq:zTbar}
zT(z)&= \mspace{-5mu}-(R + B_u^\ast PB_u)^{-\ast/2}B_u^\ast (z^{-1} I - A_K^\ast )^{-1} P B_w
\end{align}
from Lemma \ref{lemma:decomposition} to obtain 
\begin{align}
 L'(z)&= -(R + B_u^\ast PB_u)^{-\ast/2}B_u^\ast \Pi (I+F_\gamma(zI- F_\gamma)^{-1})K_\gamma.\nn
\end{align}
Note that $zT(z)$ is bounded on the unit circle since $(A,B_u)$ is stabilizable so that the singular values of $A_K$ are strictly smaller than $1$. Finally, we compute $L(z) = z^{-1}L'(z)$.
\end{proof}
Using Lemmas \ref{lemma:Delta}-\ref{lemma:decomposition}, we can prove our main results.
\begin{proof}[Proof of Theorem \ref{th:stricrly_regret}]
To compute $\regret$, we apply Theorem \ref{th:Nehari_general} with $zT(z)$ in \eqref{eq:zTbar}. Recall that the optimal controller in \eqref{eq:th_reduction_con} is $K(z)= \Delta^{-1}(z)(L(z) + S(z))$. By Lemma \ref{lemma:Delta}, we have
\begin{align}
\Delta^{-1}(z)&= R^{1/2}(I - K_{\text{lqr}} (z I - A_K)^{-1}B_u) (R + B_u^\ast PB_u)^{-1/2}.\nn
\end{align}
The proof follows by computing the products in $K(z)$ and showing that one of the hidden states of the controller is equal to the state $x_t$. An extended proof is given in Appendix \ref{app:technical_lemmas}.%\bb{\cite[Th. ?]{extended}}.
% and \ref{lemma:decomposition}, we have
%     L(z) &= -(R + B_u^\ast PB_u)^{-\ast/2}B_u^\ast \Pi (zI- F_\gamma)^{-1}K_\gamma\nn\\
\end{proof}

\section{Conclusions}\label{sec:conclusion}
A novel controller is derived based on a regret criterion when compared to a clairvoyant controller with non-causal access to the entire disturbance sequence. The main difference from the classical $\mathcal H_\infty$ is its robustness against a clairvoyant controller rather than the classical robustness without a reference controller. The implementation of the regret-optimal controller is simple and is published in a public Git repository \cite{sabaggithubFI}. As illustrated in the numerical examples, the regret is a viable criterion and its potential should be assessed for other control systems. In two subsequent works, regret-based systems design has been utilized for filtering in \cite{SabagFilteringAISTATS} and the finite-horizon control problem studied in this paper \cite{gautam_FI_TV}.

\bibliography{ref}

% Generated by IEEEtran.bst, version: 1.14 (2015/08/26)
\begin{thebibliography}{10}
\providecommand{\url}[1]{#1}
\csname url@samestyle\endcsname
\providecommand{\newblock}{\relax}
\providecommand{\bibinfo}[2]{#2}
\providecommand{\BIBentrySTDinterwordspacing}{\spaceskip=0pt\relax}
\providecommand{\BIBentryALTinterwordstretchfactor}{4}
\providecommand{\BIBentryALTinterwordspacing}{\spaceskip=\fontdimen2\font plus
\BIBentryALTinterwordstretchfactor\fontdimen3\font minus
  \fontdimen4\font\relax}
\providecommand{\BIBforeignlanguage}[2]{{%
\expandafter\ifx\csname l@#1\endcsname\relax
\typeout{** WARNING: IEEEtran.bst: No hyphenation pattern has been}%
\typeout{** loaded for the language `#1'. Using the pattern for}%
\typeout{** the default language instead.}%
\else
\language=\csname l@#1\endcsname
\fi
#2}}
\providecommand{\BIBdecl}{\relax}
\BIBdecl

\bibitem{sabagFIACC}
O.~Sabag, G.~Goel, S.~Lale, and B.~Hassibi, ``Regret-optimal controller for the
  full-information problem,'' in \emph{2021 American Control Conference (ACC)},
  2021, pp. 4777--4782, available at https://arxiv.org/pdf/2105.01244.

\bibitem{29425}
J.~C. {Doyle}, K.~{Glover}, P.~P. {Khargonekar}, and B.~A. {Francis},
  ``State-space solutions to standard ${H}_2$ and ${H}_\infty$ control
  problems,'' \emph{IEEE Transactions on Automatic Control}, vol.~34, no.~8,
  pp. 831--847, 1989.

\bibitem{zames1981feedback}
G.~Zames, ``Feedback and optimal sensitivity: Model reference transformations,
  multiplicative seminorms, and approximate inverses,'' \emph{IEEE Transactions
  on automatic control}, vol.~26, no.~2, pp. 301--320, 1981.

\bibitem{1104603}
M.~Dahleh and J.~Pearson, ``$\ell^{1}$-optimal feedback controllers for {MIMO}
  discrete-time systems,'' \emph{IEEE Transactions on Automatic Control},
  vol.~32, no.~4, pp. 314--322, 1987.

\bibitem{doyle1978guaranteed}
J.~Doyle, ``Guaranteed margins for lqg regulators,'' \emph{IEEE Transactions on
  Automatic Control}, vol.~23, no.~4, pp. 756--757, 1978.

\bibitem{abbasi2011regret}
Y.~Abbasi-Yadkori and C.~Szepesv{\'a}ri, ``Regret bounds for the adaptive
  control of linear quadratic systems,'' in \emph{Proceedings of the 24th
  Annual Conference on Learning Theory}, 2011, pp. 1--26.

\bibitem{agarwal19c}
N.~Agarwal, B.~Bullins, E.~Hazan, S.~Kakade, and K.~Singh, ``Online control
  with adversarial disturbances,'' in \emph{International Conference on Machine
  Learning}.\hskip 1em plus 0.5em minus 0.4em\relax PMLR, 2019, pp. 111--119.

\bibitem{foster20b}
D.~Foster and M.~Simchowitz, ``Logarithmic regret for adversarial online
  control,'' in \emph{Proceedings of the 37th International Conference on
  Machine Learning}, vol. 119, 2020, pp. 3211--3221.

\bibitem{cohen19b}
A.~Cohen, T.~Koren, and Y.~Mansour, ``Learning linear-quadratic regulators
  efficiently with only $\sqrt{T}$ regret,'' in \emph{Proceedings of the 36th
  International Conference on Machine Learning}, 2019.

\bibitem{lale2020explore}
S.~Lale, K.~Azizzadenesheli, B.~Hassibi, and A.~Anandkumar, ``Explore more and
  improve regret in linear quadratic regulators,'' \emph{arXiv}, 2020.

\bibitem{dean2018regret}
S.~Dean, H.~Mania, N.~Matni, B.~Recht, and S.~Tu, ``Regret bounds for robust
  adaptive control of the linear quadratic regulator,'' in \emph{Advances in
  Neural Information Processing Systems}, 2018, pp. 4188--4197.

\bibitem{lale2020logarithmic}
S.~Lale, K.~Azizzadenesheli, B.~Hassibi, and A.~Anandkumar, ``Logarithmic
  regret bound in partially observable linear dynamical systems,''
  \emph{Advances in Neural Information Processing Systems}, 2020.

\bibitem{agarwal2019logarithmic}
N.~Agarwal, E.~Hazan, and K.~Singh, ``Logarithmic regret for online control,''
  in \emph{Advances in Neural Information Processing Systems}, 2019.

\bibitem{li2019online}
Y.~Li, X.~Chen, and N.~Li, ``Online optimal control with linear dynamics and
  predictions: Algorithms and regret analysis,'' in \emph{Advances in Neural
  Information Processing Systems}, 2019, pp. 14\,858--14\,870.

\bibitem{hazan2019nonstochastic}
E.~Hazan, S.~M. Kakade, and K.~Singh, ``The nonstochastic control problem,''
  \emph{arXiv preprint arXiv:1911.12178}, 2019.

\bibitem{goel2020power}
G.~Goel and B.~Hassibi, ``The power of linear controllers in lqr control,''
  \emph{arXiv preprint arXiv:2002.02574}, 2020.

\bibitem{nehari1957bounded}
Z.~Nehari, ``On bounded bilinear forms,'' \emph{Annals of Mathematics}, pp.
  153--162, 1957.

\bibitem{hassibi1999indefinite}
B.~Hassibi, A.~H. Sayed, and T.~Kailath, \emph{Indefinite-Quadratic estimation
  and control: A unified approach to $H_2$ and $H_\infty$ theories}.\hskip 1em
  plus 0.5em minus 0.4em\relax {SIAM}, 1999.

\bibitem{leibfritz2003description}
F.~Leibfritz and W.~Lipinski, ``Description of the benchmark examples in
  compleib 1.0,'' \emph{Dept. Math., Univ. Trier, Germany}, vol.~32, 2003.

\bibitem{boyd2018introduction}
S.~Boyd and L.~Vandenberghe, \emph{Introduction to applied linear algebra:
  vectors, matrices, and least squares}.\hskip 1em plus 0.5em minus 0.4em\relax
  Cambridge university press, 2018.

\bibitem{Ball1990}
J.~A. Ball, I.~Gohberg, and L.~Rodman, ``Nehari interpolation problem,''
  \emph{Interpolation of Rational Matrix Functions}, pp. 434--456, 1990.

\bibitem{GloverDoyle_book}
K.~Zhou, J.~C. Doyle, and K.~Glover, \emph{Robust and Optimal Control}.\hskip
  1em plus 0.5em minus 0.4em\relax USA: Prentice-Hall, Inc., 1996.

\bibitem{sabaggithubFI}
O.~Sabag, G.~Goel, S.~Lale, and B.~Hassibi, ``Implementation of the
  regret-optimal controller,'' Oct. 2020,
  https://github.com/oronsabag/OptimalRegretControl.

\bibitem{SabagFilteringAISTATS}
O.~Sabag and B.~Hassibi, ``Regret-optimal filtering,'' in \emph{Proceedings of
  The 24th International Conference on Artificial Intelligence and Statistics},
  PMLR 130:2629--2637, 2021.

\bibitem{gautam_FI_TV}
G.~Goel and B.~Hassibi, ``Regret-optimal control in dynamic environments,''
  \emph{arXiv preprint}, 2020.

\end{thebibliography}
\bibliographystyle{IEEEtran}
\clearpage
\appendices

\section{The non-causal controller (Theorem \ref{th:noncausal})}\label{app:non-causal}
The derivation of the non-causal controller is shown next.
\begin{proof}[Proof of Theorem \ref{th:noncausal}]
For any $w$, the non-casual sequence of control actions $v$ is the solution of $$\min_v  \|Fv + Gw\|_2^2 + \|v\|_2^2.$$ By a standard completion of the square, the objective can be written as
\begin{align}\label{eq:proof_reg_quadratic}
  &\|Fv + Gw\|_2^2 + \|v\|_2^2= \nn\\
  &(v\mspace{-2mu} + \mspace{-2mu}(I \mspace{-2mu}+\mspace{-2mu} F^{*}F)^{-1}F^*Gw)^* (I\mspace{-2mu}+\mspace{-2mu}F^*F) (v \mspace{-2mu}+\mspace{-2mu} (I \mspace{-2mu}+\mspace{-2mu} F^{*}F)^{-1} F^*Gw) \nn\\
  &\ + w^*G^*(I + FF^*)^{-1}Gw,
\end{align}
where $(I + F^{*}F)$ is invertible since it is a positive-definite operator and we also use $I-F(I+F^\ast F)^{-1}F^\ast = (I+ FF^\ast)^{-1}$. Note that $v$ is not assumed \textit{a priori} to be a linear function of $w$. Since the first term of \eqref{eq:proof_reg_quadratic} is non-negative, and the second term is independent of $v$, it is clear that that the linear mapping $v = K_0w$ with $K_0 = - (I + F^{*}F)^{-1}F^{*}G$ minimizes the cost.
\end{proof}

\section{Additional Numerical simulations}

\rr{We evaluate the Boeing 747 system with an integrator (i.e., an auto-regressive Gaussian process) disturbance:
$$w_t = n_t + \beta w_{t-1}, \ \ \ \ n_t\stackrel{\text{i.i.d.}}{\sim} \mathcal N(0,I).$$
The AR noise is parameterized with scalar $\beta$ that governs its spectral behavior. Similar to prior time-domain experiments, we run 30 independent trials and present the mean average cost as a function of time for different values of $\beta$ in Fig. \ref{fig:time_domain_colored}. For low $\beta$, the $\mathcal H_2$ controller attains the best performance. This is since the AR noise spectrum \emph{tends to} a white noise, but still it can be seen that the regret-optimal controller attains a control cost which is very close to the performance of the $\mathcal H_2$ controller. The other extreme, when $\beta$ is close to $1$, implies that the $\mathcal H_\infty$ controller outperforms the other controllers. Also here, the cost attained by the regret-optimal controller is close to the performance of the best controller. We also present the mid region where it can be seen that regret-optimal controller attains the best control cost among the other controllers. Overall, our time-domain evaluation shows that the regret-optimal controller either attains the best performance among the evaluated controllers or attains a performance which is relatively close to the lower cost that can be attained among these controllers. We thus say that regret-optimal controller attains the best of both worlds since it nicely interpolates between the $\mathcal H_2$ and the $\mathcal H_\infty$ controllers.}

\begin{figure}
    \centering
    \includegraphics[width=0.49\textwidth]{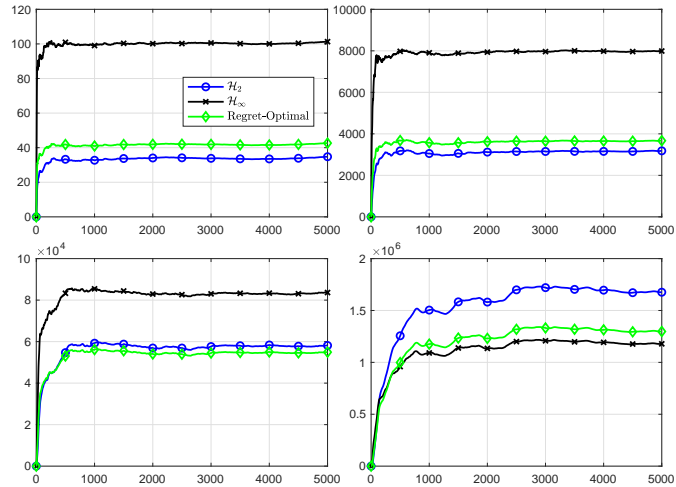}
    \caption{\rr{The control cost of the Boeing 747 example under an autoregressive noise parameterized with (a) $\beta \!=\! 0.1$, (b) $\beta \!=\! 0.5$, (c) $\beta \!=\! 0.9$, (d) $\beta \!=\! 0.99$. For small $\beta$, $\mathcal{H}_2$ outperforms other strategies and for large $\beta$, e.g. $\beta = 0.9$, regret-optimal control achieves the best performance. In the extreme case of $\beta$ close to $1$, $\mathcal{H}_{\infty}$ attains the superior performance.}}
    \label{fig:time_domain_colored}
\end{figure}

\section{Solution to the general Nehari problem}\label{app:Nehari_general}
\begin{proof}[Proof of Theorem \ref{th:Nehari_general}]
By Theorem $12.8.2$ in \cite{hassibi1999indefinite}, the optimal value of a Nehari problem is the maximal singular value of the Hankel operator of $T(z) = H(zI-F^\ast)^{-1}G$. The maximal singular value of the squared Hankel operator can be computed as the maximal eigenvalue of the product $\Pi Z$, where $Z\succeq0$ and $\Pi\succeq0$ are the controllability and observability Gramians, respectively. The Grammians can be computed as the solutions to the Lyapunov equations
\begin{align}
\Pi &= F \Pi F^\ast + G G^\ast\nn\\
 Z &= F^\ast Z F + H^\ast H.
\end{align}

The second part of Theorem \ref{th:Nehari_general} is the characterization of a solution that achieves a norm $\gamma$. The optimal solution will be derived as the \emph{central solution} of a more general solution that appears in \cite{hassibi1999indefinite}. In particular, we will utilize Lemma $12.8.1$ and Lemma $12.8.2$ in \cite{hassibi1999indefinite} as the starting point to simplify the central solution. Throughout the derivations, we use their notation and relate these to our notation at the proof's end.

The factorization 
\begin{align}\label{eq:proof_L_matrix}
    \begin{pmatrix} L_{11}(z)& L_{12}(z) \\ L_{21}(z) & L_{22}(z)
    \end{pmatrix}
    = R_e^{1/2} +
    \begin{pmatrix}
    -G^\ast\\ H\Pi F^\ast
    \end{pmatrix} (zI-F^\ast)^{-1}K_pR_e^{1/2},
\end{align}
appears in the proof of Lemma $12.8.1$, where the constants  
\begin{align*}
    K_p R_e^{1/2}&= \begin{pmatrix} -F^\ast PG & H^\ast + F^\ast P(I + GG^\ast P)^{-1}F\Pi H^\ast \end{pmatrix}\nn\\
    &\ \ \cdot \begin{pmatrix}
    (I+G^\ast P G)^{-\ast/2}&0\\
    0& \Delta^{-\ast/2}
    \end{pmatrix}\nn\\
    &= \begin{pmatrix} -F^\ast PG (I+G^\ast P G)^{-\ast/2} & \phi \end{pmatrix}\nn\\
    R_e^{1/2}& =
    \begin{pmatrix}
    (I+G^\ast PG)^{1/2} &0 \\
    -H\Pi F^\ast PG(I+G^\ast PG)^{-\ast/2}& \Delta^{1/2}
    \end{pmatrix}
\end{align*}
are taken from \cite[Eq. $12.8.26$]{hassibi1999indefinite}, $\Delta$ is defined as
\begin{align}
    \Delta&= \gamma^2I - H\Pi H^\ast -H\Pi F^\ast (P^{-1}+GG^\ast)^{-1} F\Pi H^\ast,
\end{align}
and $P$ is a solution to the Riccati equation $ P= F^\ast PF - K_pR_eK_p^\ast$. 

We now simplify the relevant coordinates in the second term of \eqref{eq:proof_L_matrix} for the central solution $-L_{21}(z)L^{-1}_{11}(z)$
\begin{align*}
    &\begin{pmatrix}
    -G^\ast\\ H\Pi F^\ast
    \end{pmatrix} (zI-F^\ast)^{-1}K_pR_e^{1/2}\nn\\
    &= \begin{pmatrix}
    G^\ast (zI-F^\ast)^{-1}F^\ast PG (I+G^\ast P G)^{-\ast/2} &\phi\\ -H\Pi F^\ast(zI-F^\ast)^{-1}F^\ast PG (I+G^\ast P G)^{-\ast/2}&\phi \end{pmatrix},
\end{align*}
so that $L_{11}(z)$ and $L_{21}(z)$ can be explicitly expressed as
\begin{align*}
    L_{11}(z)
    % &= (I+G^\ast PG)^{1/2} + G^\ast (zI-F^\ast)^{-1}F^\ast PG (I+G^\ast P G)^{-\ast/2} \nn\\
    &= [I + G^\ast (zI-F^\ast)^{-1}K_\gamma] (I+G^\ast PG)^{1/2}\nn\\
    L_{21}(z)&= -H\Pi F^\ast PG(I+G^\ast PG)^{-\ast/2} \nn\\
    &\ -H\Pi F^\ast(zI-F^\ast)^{-1}F^\ast PG (I+G^\ast P G)^{-\ast/2}\nn\\
    &= -H\Pi(I + F^\ast(zI-F^\ast)^{-1}F^\ast) F^\ast PG(I+G^\ast PG)^{-\ast/2},
\end{align*}
where we denoted $K_\gamma \triangleq F^\ast PG (I+G^\ast P G)^{-1}$ (the subscript $\gamma$ is due to $P$ that implicitly depends on $\gamma$).
% By the matrix inversion lemma, $L_{11}(z)$ can be computed as
% \begin{align*}
%     L_{11}^{-1}(z)&= (I+G^\ast PG)^{-1/2}[I + G^\ast (zI-F^\ast)^{-1} K_p]^{-1},
%     % &= (I+G^\ast PG)^{-1/2} (I - G^\ast(zI-F^\ast +K_p G^\ast  )^{-1}K_p )
% \end{align*}

The central solution can be explicitly written as
\begin{align}\label{eq:app_controller}
    &-L_{21}(z)L^{-1}_{11}(z)\nn\\
    % &= H\Pi(I + F^\ast(zI-F^\ast)^{-1}F^\ast) F^\ast PG(I+G^\ast PG)^{-1} [I + G^\ast (zI-F^\ast)^{-1}K_p]^{-1} \nn\\
   &= H \Pi( I + F^\ast(zI-F^\ast)^{-1} )K_\gamma(I+G^\ast (zI-F^\ast)^{-1} K_\gamma)^{-1}\nn\\
    &= H \Pi( I + F^\ast(zI-F^\ast)^{-1} )(I+K_\gamma G^\ast (zI-F^\ast)^{-1} )^{-1}K_\gamma\nn\\
    &= H \Pi z(zI-F^\ast)^{-1} (I+K_\gamma G^\ast (zI-F^\ast)^{-1} )^{-1}K_\gamma \nn\\
    &\stackrel{(a)}= H \Pi z (zI-F_\gamma)^{-1}K_\gamma\nn\\
    &= H \Pi (I + F_\gamma(zI- F_\gamma)^{-1} )K_\gamma,
\end{align}
where $(a)$ follows from $F_\gamma \triangleq F^\ast - K_\gamma G^\ast$.

Finally, let $Z_\gamma$ be the unique solution to the Lyapunov equation
\begin{align*}
 Z_\gamma &= F^\ast Z_\gamma F + \gamma^{-2}H^\ast H.
\end{align*}
Then, by Lemma $12.8.2$, the solution to the Riccati equation above is given $P = (I-Z_\gamma\Pi)^{-1}Z_\gamma$. This solution is utilized to simplify $K_\gamma$ as
\begin{align}
    K_\gamma &= F^\ast (I-Z_\gamma\Pi)^{-1}Z_\gamma G (I+G^\ast (I-Z_\gamma\Pi)^{-1}Z_\gamma G)^{-1}\nn\\
    % &= F^\ast (I-Z_\gamma\Pi)^{-1} (I+Z_\gamma GG^\ast (I-Z_\gamma\Pi)^{-1})^{-1}Z_\gamma G\nn\\
    &= F^\ast ( I-Z_\gamma\Pi + Z_\gamma GG^\ast )^{-1}Z_\gamma G\nn\\
    % &= F^\ast ( I-Z_\gamma\Pi + Z_\gamma (\Pi - F \Pi F^\ast) )^{-1}Z_\gamma G\nn\\
    &\stackrel{(a)}= F^\ast ( I - Z_\gamma F \Pi F^\ast )^{-1}Z_\gamma G,
\end{align}
where $(a)$ follows from $\Pi = F \Pi F^\ast + G G^\ast$.

To conclude the proof with our notation, we denote $L(z)$ in Theorem \ref{th:Nehari_general} as the central solution $-L_{21}(z)L^{-1}_{11}(z)$ in \eqref{eq:app_controller}.

% Recall that the regret-optimal controller can be recovered from the above solution as
% \begin{align*}
%   K(z) = \Delta^{-1}(z)(\bar{K}(z) + S(z)),
% \end{align*}
% where $ \bar{K}(z) = \bar{H}(zI - \bar{F})^{-1}\bar{G} + \bar{J}$.
% We showed in the main document that $\widehat{T}(z)$ can be written as
% \begin{align}
% \widehat{T}(z)&= H \Pi K_p - H\Pi K_pG^\ast(zI-F^\ast +K_p G^\ast  )^{-1}K_p \nn\\
%  &\ \ \ + H\Pi F^\ast(zI-F^\ast)^{-1} K_p - H\Pi F^\ast(zI-F^\ast)^{-1} K_p G^\ast(zI-F^\ast +K_p G^\ast  )^{-1}K_p,
% \end{align}
% where $K_p = F^\ast (I-Z_\gamma\Pi)^{-1}Z_\gamma G (I+G^\ast (I-Z_\gamma\Pi)^{-1}Z_\gamma G)^{-1}$.

% To simplify the controller, consider
% \begin{align}
%     &H \Pi K_p[I - G^\ast(zI-F^\ast +K_p G^\ast  )^{-1}K_p] + H\Pi F^\ast(zI-F^\ast)^{-1} K_p[I - G^\ast(zI-F^\ast +K_p G^\ast  )^{-1}K_p]\nn\\
%     % &= (H \Pi K_p + H\Pi F^\ast(zI-F^\ast)^{-1} K_p)(I - G^\ast(zI-F^\ast +K_p G^\ast  )^{-1}K_p) \nn\\
%     &\stackrel{(a)}= H \Pi( I + F^\ast(zI-F^\ast)^{-1} )K_p(I+G^\ast (zI-F^\ast)^{-1} K_p)^{-1}\nn\\
%     % &= H \Pi( I + F^\ast(zI-F^\ast)^{-1} )(I+K_pG^\ast (zI-F^\ast)^{-1} )^{-1}K_p\nn\\
%     % &= H \Pi z(zI-F^\ast)^{-1} (I+K_pG^\ast (zI-F^\ast)^{-1} )^{-1}K_p\nn\\
%     &\stackrel{(b)}= H \Pi z (zI-F_c )^{-1}K_p\nn\\
%     &= H \Pi (I + F_c(zI- F_c)^{-1} )K_p,
% \end{align}
% where $(a)$ follows from the Matrix inversion lemma and $(b)$ follows from $F_c = F^\ast - K_pG^\ast$.
\end{proof}

\section{Proofs of Technical lemmas $2-4$ and extended proof of Theorem $4$}\label{app:technical_lemmas}

\begin{proof}[Proof of Lemma \ref{lemma:fg}]
The linear operator $\mathcal T_F:\v{v}\to\v{s}$ can be represented as the state-space model
\begin{align}
    x_{t+1}&= Ax_t + B_u R^{-1/2}v_t\nn\\
    s_t &= Q^{1/2}x_t.
\end{align}
By taking the $z$-transform, we obtain:
\begin{align}
    z X(z)&= AX(z) + B_u R^{-1/2}V(z) \nn\\
    S(z) &= Q^{1/2}X(z),
\end{align}
so that $F(z) = Q^{1/2}(z I - A)^{-1}B_uR^{-1/2}$. The transfer function $G(z) = Q^{1/2}(z I - A)^{-1}B_w$ can be obtained similarly from the state-space model
\begin{align*}
    x_{t+1}&= Ax_t + B_w w_t \\
    s_t &= Q^{1/2}x_t.
\end{align*}
\end{proof}

\begin{proof}[Proof of Lemma \ref{lemma:Delta}]
By Lemma \ref{lemma:fg}, we have
\begin{align} \label{center-z-transform}
& I+ F^\ast(z^{-\ast})F(z) \\
&= I + R^{-\ast/2}B_u^\ast (z^{-1} I - A^\ast)^{-1}Q(z I - A)^{-1}B_uR^{-1/2}.\nn  
\end{align}
For ease of derivation, we will factor the term $R^{\ast/2}(I+ F^\ast(z^{-\ast})F(z))R^{1/2}$ as $\tilde{\Delta}^\ast(z^{-\ast})\tilde{\Delta}(z)$, and then the required factorization can be recovered as $\Delta(z) = \tilde{\Delta}(z)R^{-1/2}$.

First, we write $R^{\ast/2}(I+ F^\ast(z^{-\ast})F(z))R^{1/2}$ in the matrix form as
\begin{align}\label{eq:proof_factorization}
&\begin{pmatrix} B_u^{*}(z^{-1}I - A^\ast)^{-1} & I \end{pmatrix} \begin{pmatrix} Q & 0 \\ 0 & R\end{pmatrix} \begin{pmatrix} (zI - A)^{-1}B_u \\ I \end{pmatrix}\nn\\
&\ =\begin{pmatrix} B_u^{*}(z^{-1}I - A^\ast)^{-1} & I \end{pmatrix} \begin{pmatrix} Q -P + A^{*}PA & A^{*}PB_u \\ B_u^{*}PA & R + B_u^{*}PB_u \end{pmatrix}\nn\\
&\ \cdot \begin{pmatrix} (zI - A)^{-1}B_u \\ I \end{pmatrix},
\end{align}
where the equality can be verified directly and holds for any Hermitian matrix $P$.

%   $$\begin{pmatrix} B^{*}(z^{-1}I - A^\ast)^{-1} & I \end{pmatrix} \begin{pmatrix} -P + A^{*}PA & A^{*}PB \\ B^{*}PA & B^{*}PB \end{pmatrix} \begin{pmatrix} (zI - A)^{-1}B \\ I \end{pmatrix} = 0. $$
% Therefore the right-hand side of equation (\ref{center-z-transform}) can be expressed as

The middle matrix in \eqref{eq:proof_factorization} can be factored as $$\begin{pmatrix} I & \Psi^{*}(P) \\ 0 & I \end{pmatrix} \begin{pmatrix} \Gamma(P) & 0   \\0 & R + B_u^{*}PB_u  \end{pmatrix} \begin{pmatrix} I & 0 \\ \Psi(P) & I \end{pmatrix}, $$ where $$\Gamma(P) \triangleq Q - P + A^{*}PA - A^{*}PB_u(R + B_u^{*}PB_u)^{-1}B_u^{*}PA$$ and $$\Psi(P) \triangleq  (R + B_u^{*}PB_u)^{-1}B_u^{*}PA.$$
Since $(A, B_u)$ is a stabilizable pair; then the Riccati equation $\Gamma(P) = 0$ has a unique Hermitian solution. Suppose $P$ is chosen to be this solution and define $K_{\text{lqr}} = \Psi(P)$. Finally, by defining
$$\tilde{\Delta}(z) =  (R + B_u^{*}PB_u)^{1/2} (I + K_{\text{lqr}} (zI - A)^{-1}B_u),$$
we obtain the desired factorization $$ R^{\ast/2}(I+ F^\ast(z^{-\ast})F(z))R^{1/2} = \tilde{\Delta}^\ast(z^{-\ast})\tilde{\Delta}(z).$$ Recall that $\Delta(z) = \tilde{\Delta}(z)R^{-1/2}$, so $$\Delta(z) =  (R + B_u^{*}PB_u)^{1/2} (I + K_{\text{lqr}} (zI - A)^{-1}B_u)R^{-1/2}.$$

Finally, it remains to check that this choice of $\Delta(z)$ is causal, and its inverse is causal and bounded on the unit circle.

To see that the inverse is bounded, by the Matrix Inversion Lemma, the poles are at the eigenvalues of the matrix $A - B_uK_{\text{lqr}}$. It is a stable since $P$ was chosen to be the unique Hermitian solution to the Ricatti equation, and hence its spectral radius is less than $1$, which due to the causality of $\Delta^{-1}(z)$ guarantees the boundedness of $\Delta^{-1}(z)$ on the unit circle.
\end{proof}

\begin{proof}[Proof of Lemma \ref{lemma:decomposition}]
Recall that we decompose the product $\Delta(z)K_0(z) = -\Delta^{-*}(z^{-*})F^*(z^{-*})G(z)$ with
\begin{align}\label{eq:proof_ALLoperators}
    \Delta^{-*}(z^{-*}) &= (R + B_u^\ast PB_u)^{-\ast/2}\nn\\
    & \ \ \cdot (I + B_u^\ast(z^{-1}I-A^\ast)^{-1}K_{lqr}^\ast)^{-1}R^{\ast/2}\nn\\
    F^*(z^{-*}) &= R^{-\ast/2}B_u^\ast (z^{-1} I - A^\ast)^{-1}Q^{\ast/2}\nn\\
    G(z) &= Q^{1/2}(zI - A)^{-1}B_w.
\end{align}

% First, that the middle part of $\Delta^{-*}(z^{-*})$ can be written by the matrix inversion lemma as
% \begin{align}
%     I - B^*(z^{-1}I - A_K^*)^{-1}K_{\text{lqr}}^* &= (I + B^\ast(z^{-1}I-A^\ast)^{-1}K_{lqr}^\ast)^{-1}
% \end{align}

First, consider the $\Delta^{-*}(z^{-*})F^*(z^{-*})$ (omitting constants on the sides)
\begin{align}\label{eq:proof_decomposition_causal_product}
 &(I + B_u^\ast(z^{-1}I-A^\ast)^{-1}K_{lqr}^\ast)^{-1} B_u^\ast (z^{-1} I - A^\ast)^{-1}\nn\\
 &= B_u^\ast (I + (z^{-1}I-A^\ast)^{-1}K_{lqr}^\ast B_u^\ast)^{-1}  (z^{-1} I - A^\ast)^{-1}\nn\\
 &= B_u^\ast (z^{-1} I - A^\ast + K_{lqr}^\ast B_u^\ast)^{-1}\nn\\
&=  B_u^\ast (z^{-1} I - A_K^\ast )^{-1}
\end{align}

We now multiply $G(z)$ with \eqref{eq:proof_decomposition_causal_product} and apply a decomposition as appear in \cite[Lemma $12.3.3$]{hassibi1999indefinite},
\begin{align}\label{eq:prood_decom_lasteq}
    & - \Delta^{-*}(z^{-*})F^*(z^{-*})G(z)\nn\\
    &= -(R + B_u^\ast PB_u)^{-\ast/2}B_u^\ast \nn\\
    &\cdot (z^{-1} I - A_K^\ast )^{-1}Q (zI - A)^{-1} B_w\nn\\
    &= - (R + B_u^\ast PB_u)^{-\ast/2}B_u^\ast \nn\\
    &\cdot [(z^{-1} I - A_K^\ast )^{-1}A_K^\ast W + W A (zI - A)^{-1}+ W]B_w,
\end{align}
where $W$ solves $Q - W + A_K^\ast WA = 0$ . Finally, note that $W=P$ solves the Lyapunov equation. We now identify the strictly causal part of \eqref{eq:prood_decom_lasteq} as $S(z)$ and the remaining terms as $T(z)$.
\end{proof}

\begin{proof}[Extended proof of Theorem \ref{th:stricrly_regret}]
To compute $\regret$, we apply Theorem \ref{th:Nehari_general} with $zT(z)$ in \eqref{eq:zTbar}. Recall that the optimal controller in \eqref{eq:th_reduction_con} is $K(z)= \Delta^{-1}(z)(L(z) + S(z))$. By Lemmas \ref{lemma:our_nehari}, \ref{lemma:Delta}, and \ref{lemma:decomposition}, we have
\begin{align}
    L(z) &= -(R + B_u^\ast PB_u)^{-\ast/2}B_u^\ast \Pi (zI- F_\gamma)^{-1}K_\gamma\nn\\
\Delta^{-1}(z)&= R^{1/2}(I - K_{\text{lqr}} (z I - A_K)^{-1}B_u) (R + B_u^\ast PB_u)^{-1/2},\nn
\end{align}
and ${S}(z)$, respectively. The proof follows by computing the products in $K(z)$ and showing that one of the hidden states of the controller is equal to the system state $x_t$. Each of the produces can be simplified as follows
\begin{align*}
    &\Delta^{-1}(z){S}(z)= -R^{1/2}K_{lqr}(zI-A_K)^{-1}B_w\nn\\
    &\Delta^{-1}(z){L}(z) = -R^{1/2}(I - K_{\text{lqr}} (z I - A_K)^{-1}B_u) \nn\\
    & \hspace{10mm}\cdot (R + B_u^\ast PB_u)^{-1}B_u^\ast {\Pi} (zI- F_\gamma)^{-1} K_\gamma,
\end{align*}
and can be organized as 
\begin{align}\label{eq:proof_z_controller}
    K(z)&=H(zI-F)^{-1}G,
\end{align}
with
\begin{align}
    H &= - R^{1/2} \begin{pmatrix} (R + B_u^\ast PB_u)^{-1}B_u^\ast \Pi  F_\gamma & K_{\text{lqr}}\end{pmatrix}\nn\\
    F&= \begin{pmatrix} F_\gamma &0\\  - B_u (R + B_u^\ast PB_u)^{-1}B_u^\ast \Pi &A_K \end{pmatrix}\\
    G&= \begin{pmatrix} K_\gamma \\ B_w - B_u(R + B_u^\ast PB_u)^{-1}B_u^\ast (PB_w + \Pi K_\gamma) \end{pmatrix}.\nn
\end{align}
We proceed to show that one of the hidden states of the controller is the system state $x_t$. Let $\xi^1,\xi^2$ be the hidden states of the controller in \eqref{eq:proof_z_controller}. Then, the controller can be written as
\begin{align}\label{eq:output}
    \begin{pmatrix}\xi^1_{t+1}\\\xi^2_{t+1}\end{pmatrix}&=
    F    \begin{pmatrix}\xi^1_t\\\xi^2_t\end{pmatrix} + G w_t\nn\\
    R^{1/2}u_t&= H     \begin{pmatrix}\xi^1_{t+1}\\\xi^2_{t+1}\end{pmatrix} 
    \end{align}
The control signal can be explicitly written as
\begin{align}
    u_t&= - (R + B_u^\ast PB_u)^{-1}B_u^\ast \Pi  F_\gamma \xi^1_t - K_{\text{lqr}} \xi^2_t.
\end{align}
The evolution of $\xi^2_t$ is
\begin{align}
    \xi^2_{t+1}
    &= A \xi^2_t + B_u u_t + B_w w_t\nn\\
    &\stackrel{(b)}= x_{t+1},
\end{align}
where $(a)$ follows from the control signal in \eqref{eq:output} and $(b)$ follows from an inductive argument on the evolution of $\xi^2_{t}$.

Finally, the evolution of $\xi_{t}^1$ is given by 
\begin{align}
    \xi^1_{t+1}&= F_\gamma \xi^1_t + K_\gamma w_t\nn\\
    \tilde{u}_t&= (R + B_u^\ast PB_u)^{-\ast/2}B_u^\ast \Pi  F_\gamma \xi^1_t \nn\\
    & \ + (R + B_u^\ast PB_u)^{-\ast/2}B_u^\ast \Pi  K_\gamma w_t.
\end{align}
To simplify the presentation of the controller, we scale $\tilde{u_t}$ with $-(R + B_u^\ast PB_u)^{-1/2}$ as
$\hat{u}_t = -(R + B_u^\ast PB_u)^{-1/2}\tilde{u_t}$ and omit the redundant superscript. To conclude, the control signal can be written as
\begin{align*}
    u_t&= \hat{u}_t - K_{\text{lqr}} \xi^2_t.
\end{align*}
\end{proof}
\end{document}